\documentclass[11pt, a4paper]{amsart}
\usepackage[top=35truemm,bottom=35truemm,left=35truemm,right=35truemm]{geometry}
\usepackage{setspace} 
\usepackage{amsmath, amsfonts,amsthm,amssymb,mathrsfs}
\usepackage{enumerate}
\usepackage[dvipdfmx]{graphicx}
\usepackage{ascmac}
\usepackage[arrow,matrix]{xy}
\usepackage{color}
\usepackage{array}
\usepackage{pifont}
\usepackage{longtable}
\usepackage{mathtools}

\newtheorem{theo}{Theorem}
\newtheorem{defi}{Definition}

\newtheorem{prop}{Proposition}
\newtheorem{cor}{Corollary}
\newtheorem{lem}{Lemma}

%{Q.E.D}
%\renewcommand{\figurename}{Table\;}

\def\C{\mathbb{C}}

\def\H{\mathbb{H}}
\def\R{\mathbb{R}}
\def\E{\mathbb{E}}
\def\U{\mathbb{U}}

\begin{document}

\title[ ]{On 3-dimensional hyperbolic Coxeter pyramids}

\author{Yohei Komori}
\address{Department of Mathematics, School of Education, Waseda University, Nishi-Waseda 1-6-1, Shinjuku, Tokyo 169-8050, Japan}
\email{ykomori@waseda.jp}
\author{Yuriko Umemoto}
\address{Osaka City University Advanced Mathematical Institute, 3-3-18 Sugimoto, Sumiyoshi-ku, Osaka 558-8585, Japan}
\email{yuriko.ummt.77@gmail.com}
\subjclass[2010]{Primary~20F55, Secondary~20F65}
\keywords{Coxeter group; growth function; growth rate; Perron number}
\date{}
\thanks{}

\begin{abstract}
After classifying 3-dimensional hyperbolic Coxeter pyramids by means of elementary plane geometry,
we calculate growth functions of corresponding Coxeter groups by using Steinberg formula and conclude that growth rates of them are always Perron numbers.
We also calculate hyperbolic volumes of them and compare volumes with their growth rates. Finally 
we consider a geometric ordering of Coxeter pyramids comparable with their growth rates.
\end{abstract}

                                   % Activate to display a given date or no date
\maketitle

%\tableofcontents

\setstretch{1.1}
\section{Introduction}
Let $\H^n$ denote hyperbolic $n$--space and $\overline{\H^n}$ its closure.
A convex polyhedron $P \subset \overline{\H^n}$ of finite volume is called a Coxeter polyhedron if all its dihedral angles are submultiples of $\pi $ or $0$.
%The reflections in $\H^n$ with respect to the hyperplanes containing the facets of a Coxeter polyhedron $P$ generates the discrete subgroup of $\mbox{Isom}(\H^n)$, which is called a {\em hyperbolic Coxeter group}.

Let $G$ be a discrete group generated by finitely many reflections in hyperplanes containing facets of a Coxeter polyhedron $P\subset \overline{\H^n}$ and denote by $S$ the set of generating reflections. 
Then we call $G$ a hyperbolic Coxeter group with respect to $P$.
Note that the Coxeter polyhedron $P$ itself is a fundamental domain of $G$.
For each generator $s \in S$, one has $s^2=1$ while two distinct elements $s, s' \in S$ satisfy either no relation or provide the relation $(ss')^m= 1$ for some integer $m = m(s,s') \geq 2$. 
The first case arises if the corresponding mirrors admit a common perpendicular or intersect at the boundary of $\mathbb{H}^n$, while the second case arises if the mirrors intersect in $\mathbb{H}^n$. 

For a pair $(G, S)$ of a hyperbolic Coxeter group $G$ and a finite set of generators $S$, we can define the word length $\ell_S(x)$ of $x \in G$ with respect to $S$
by the smallest integer $n \geq 0$ for which there exist $s_1, s_2, \ldots, s_n \in S$ such that $x=s_1s_2 \cdots s_n$.
We assume that $\ell_S(1_G)=0$ for the identity element $1_G \in G$.
The growth series $f_S(t)$ of $(G, S)$ is the formal power series $\sum_{k=0}^{\infty} a_kt^k$ where $a_k$ is the number of elements $g \in G$ satisfying $\ell_S(g)=k$.
Since $G$ is an infinite group, the growth rate of $(G, S)$, $\tau =\limsup_{k \rightarrow \infty} \sqrt[k]{a_k}$ satisfies $1\leq \tau \leq |S|$ since $a_k \leq |S|^k$, where $|S|$ denotes the cardinality of $S$.
By means of the Cauchy--Hadamard formula, it turns to the condition $1/|S|\leq R \leq 1$ for the radius of convergence $R$ of $f_S(t)$.
Therefore $f_S(t)$ is not only a formal power series but also an analytic function of $t \in \mathbb{C}$ on the disk $|t|<R$.
It is known to be of exponential growth \cite{dlH1} 
and its growth rate $\tau>1$ is an algebraic integer (cf. \cite{D}). 

In this paper, we classify Coxeter pyramids of finite volume in $\overline{\H^3}$ by using the elementary plane geometry;
we reduce it  to the problem of circles and rectangles in Euclidean plane by using the link of the apex $v=\infty$ of a pyramid 
in upper half space model $\U^3$ (see Section 3.1). 
Note that Tumarkin \cite{T} already classified all Coxeter pyramids of finite volume in $\overline{\H^n}$ 
by using a combinatorial idea.  
Then by means of our former result \cite{KU} essntially,
we show that the growth rates of Coxeter pyramids  in $\overline{\H^3}$ are Perron numbers, i.e.,  real algebraic integers $\tau >1$ all of whose conjugates have strictly smaller absolute values.
We also calculate hyperbolic volumes of Coxeter pyramids and check that volumes  are not proportional to growth rates.
Finally we introduce a partial ordering among Coxeter pyramids by using inclusion relation and show that this ordering is promotional to growth rates.

The paper is organized as follows.
In Section 2 after reviewing the upper half space model of $3$--dimensional hyperbolic space and the link of a vertex of a convex polyhedron, 
we classify Coxeter pyramids  in $\overline{\H^3}$.
In Section 3 we collect some basic results of the growth functions and growth rates of Coxeter groups and show that 
growth rates of Coxeter pyramids are always Perron numbers.
In section 4 we calculate hyperbolic volumes of Coxeter pyramids and compare them with their growth rates.
Finally, in Section 5 we introduce a geometric ordering of Coxeter pyramids comparable with their growth rates.

\section{Coxeter pyramids and their classification}

\subsection{The hyperbolic $3$--space $\U^3$ and its isometry group $I(\U^3)$}
\label{upper-model}
The Euclidean 3-space $(\R^3, |dx|)$ contains the the Euclidean plane
$$\E^2:=\{ x=(x_1,x_2,x_3)\in \R^3 \,|\, x_3=0\}$$
which defines  the upper half space of  $\R^3$
$$\U^3:=\{ x=(x_1,x_2,x_3)\in \R^3 \,|\, x_3>0\}.$$
Then $(\U^3, \frac{|dx|}{x_3}) $ is a model of the hyperbolic $3$--space, so called the {\em upper half space model}.
Let us denote by $\partial \U^3:=\E \cup \{ \infty \}$ the boundary at infinity  of $\U^3$ in the extended space $\widehat{\R}^3:=\R^3 \cup \{ \infty \}$,
and $\overline{\U^3}:=\U^3\cup \partial \U^3$  the closure of $\U^3$ in $\widehat{\R}^3$.

A subset $S \subset \U^3$ is called a {\em hyperplane} of $\U^3$ if it is a Euclidean hemisphere or halfplane orthogonal to $\E^2$. 
If we restrict the hyperbolic metric of $\U^3$ to $S$, then it is a model of the hyperbolic plane.

Any isometry $\varphi$ of $(\U^3, \frac{|dx|}{x_3}) $ can be represented as a composition of reflections with respect to hyperplanes.
Also $\varphi$ extends to the boundary map $\hat{\varphi}$ of $\partial \U^3$ so that $\hat{\varphi}$ is a
composition of reflections with respect to the boundaries of hyperplanes, i.e. Euclidean circles or lines in $\E^2$.
This correspondence induces the isomorphism between the isometry group $I(\U^3)$ of $\U^3$ and M\"obius transformation
group $M(\hat{\C})$ of the Riemann sphere $\hat{\C}$ after identifying $\E^2$ with $\C$ by $z=x_1+ix_2$.
$M(\hat{\C})$ consists of $PSL(2, \C)$ and $J$ the reflection with respect to the real axis of $\C$, defined by $J(z)=\bar{z}$.
It should be remarked that $I(\U^3)$ contains isometries corresponding to rotations, delations centered at $0 \in \C$,
the reflection with respect to the real axis of $\C$, and  parallel translations of $\C$.

\subsection{Convex polyhedra and their vertex links}
A {\em closed half space} $H_S$ is defined by the closed domain of $\U^3$ bounded by a hyperplane $S$.
We define a {\em convex polyhedron} as
a closed domain $P$ of $\U^3$ 
which can be written as the intersection of finitely many closed half spaces;
 (it is also called a {\em finite-sided convex $3$-polyhedron in $\U^3$} in \cite{R}):
$$
P=\bigcap H_S.
$$
In this presentation of $P$, we assume that $F_S:=P \cap S$ is a hyperbolic polygon of $S$.
$F_S$ is called a {\em face} of $P$ and $S$ is called the supporting plane of $F_S$.
The {\em dihedral angle} $\angle ST$ between two faces $F_S$ and $F_T$ is defined as follows:
if the intersection of two supporting planes $S$ and $T$ is nonempty, let us choose a point $x \in S \cap T$ and consider the outer-normal vectors $e_S, e_T \in \R^3$ of $S$ and $T$ 
with respect to $P$ starting from $x$. 
Then the dihedral angle $\angle ST$  is defined by the real number $\theta \in [0, \pi)$ satisfying
$$
\cos \theta =- \langle e_S , e_T \rangle
$$
where $\langle \cdot , \cdot \rangle$ denote the Euclidean inner product of $\R^3$.
It is easy to see that this does not depend on the choice of the base point $x \in S \cap T$.
If $S$ and $T$ are parallel,  i.e. the closures of $S$ and $T$ in  $\overline{\U^3}$ only intersect at a point on $\partial \U^3$,
we define  the dihedral angle $\angle ST$  is equal to zero, while if $S$ and $T$ are ultra-parallel,  i.e. the closures of $S$ and $T$ in  $\overline{\U^3}$ never intersect, 
we do not define  the dihedral angle. 

If the intersection of  two facets $F_S$ and $F_T$ of a convex polyhedron $P$ consists of a geodesic segment, it is called an {\em edge} of $P$.
If the intersection $\bigcap F_S$ of more than two facets is a point, it is called a {\em vertex} of $P$.
If the closures of $F_S$ and $F_T$ in  $\overline{\U^3}$ only intersect at a point on $\partial \U^3$,
it is called an {\em ideal vertex} of $P$.
It should be remarked that the hyperbolic volume of a non-compact convex polyhedron $P$ is finite if and only if 
the closures of $P$ in  $\overline{\U^3}$ consists of $P$ itself and ideal vertices.

Let $\Sigma=S(v; r)$ be the hyperbolic sphere of radius $r$ centered at $v \in \U^3$. 
If we restrict the hyperbolic metric of $\U^3$ to $\Sigma$, then it is a model of the 2-dimensional spherical geometry.
To analyse  local geometry of $P$ at a vertex $v$, it is useful to study a cut locus of $P$ by a hyperbolic sphere of small radius centered at $v$
which is called a {\em vertex link} $L(v)$ of $P$ at $v$:

\begin{theo}{\rm (cf. \cite[Theorem 6.4.1]{R})}
Let $v\in \U^3$ be a vertex of a  convex polyhedron $P$ in $\U^3$ and $\Sigma$ be a hyperbolic sphere of $\U^3$ based at $v$ 
such that $\Sigma$ meets just the facets of $P$ incident with $v$. 
Then the link $L(v) := P \cap \Sigma$ of $v$ in $P$ is a spherical convex polygon in the hyperbolic sphere $\Sigma$. 
If $F_S$ and $F_T$ are two facets of $P$ incident with $v$, then $F_S$ and $F_T$ are adjacent facet of $P$ 
if and only if $F_S\cap \Sigma$ and $F_T \cap \Sigma$ are adjacent sides of $L(v)$. 
If $F_S$ and $F_T$ are adjacent facets of $P$ incident with $v$, then the spherical dihedral angle between $F_S \cap \Sigma$ and $F_T \cap \Sigma$
in $\Sigma$  is equal to the hyperbolic dihedral angle between the supporting hyperplanes $S$ and $T$ in $\U^3$. \label{link1}
\end{theo}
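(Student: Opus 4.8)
The plan is to linearise the configuration at $v$ by means of the exponential map and then reduce everything to elementary Euclidean and spherical convex geometry. Identify $T_v\U^3$ with $\R^3$ endowed with the inner product coming from the hyperbolic metric at $v$, so that $\exp_v\colon\R^3\to\U^3$ is a diffeomorphism onto its image with $d(\exp_v)_0=\mathrm{id}$. I would use three properties of $\exp_v$, all consequences of the fact that $\U^3$ has constant curvature $-1$ and is homogeneous and isotropic at $v$: (i) $\exp_v$ carries linear subspaces through $0$ to totally geodesic submanifolds through $v$, and hyperplanes to hyperplanes; (ii) it carries the Euclidean sphere $\partial B_r(0)$ onto $\Sigma=S(v;r)$; (iii) the restriction $\exp_v|_{\partial B_r(0)}\colon\partial B_r(0)\to\Sigma$ is a homothety of round $2$--spheres (of ratio $(\sinh r)/r$), so it maps great circles to great circles and preserves angles. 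Establishing (iii) carefully is where constant curvature and homogeneity enter, and it is what makes the reduction legitimate.

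First I would fix $r$ so small that, inside $B(v;r)$, the only supporting hyperplanes that are relevant are $S_1,\dots,S_k$, the supporting hyperplanes of the facets of $P$ incident with $v$ (these all contain $v$; for every other supporting hyperplane $S$ one has $v\in\operatorname{int}H_S$, so $B(v;r)$ lies on the $P$--side of $S$ for $r$ small). Writing $P=\bigcap H_S$ this gives $P\cap B(v;r)=\bigcap_{i=1}^{k}H_{S_i}\cap B(v;r)$. Pulling back by $\exp_v$ and using (i)--(ii), $\exp_v^{-1}(P\cap B(v;r))=C\cap B_r(0)$, where $C=\bigcap_{i=1}^{k}\widetilde{H}_i$ is a Euclidean polyhedral cone with apex $0$, $\widetilde{H}_i=\exp_v^{-1}(H_{S_i})$ being a closed half-space bounded by the plane $\widetilde{\Pi}_i=\exp_v^{-1}(S_i)$. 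The cone $C$ is pointed: if $\bigcap_i\widetilde{\Pi}_i$ contained a line through $0$, that line near $0$ would push forward into $\bigcap_i F_{S_i}$, contradicting that $v$ is a vertex (a single point). A pointed, full-dimensional polyhedral cone meets $\partial B_r(0)$ in a spherical convex polygon contained in an open hemisphere; transporting by the homothety $\exp_v|_{\partial B_r(0)}$ shows $L(v)=P\cap\Sigma$ is a spherical convex polygon in $\Sigma$, which is the first assertion.

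For the combinatorial and metric statements, I would observe that the side of $C\cap\partial B_r(0)$ supported by $\widetilde{\Pi}_i$ is the arc $\widetilde{\Pi}_i\cap C\cap\partial B_r(0)$, whose image under $\exp_v$ is $F_{S_i}\cap\Sigma$; hence the sides of $L(v)$ are exactly the sets $F_{S_i}\cap\Sigma$. Two such sides share a common endpoint $q\in\partial B_r(0)$ if and only if the ray $\R_{\ge 0}q$ lies in $\widetilde{\Pi}_i\cap\widetilde{\Pi}_j\cap C$, i.e. if and only if $S_i\cap S_j\cap P$ contains a geodesic segment issuing from $v$; since $F_{S_i}$ and $F_{S_j}$ are distinct facets of the convex $3$--polyhedron $P$, their intersection has dimension at most $1$, so this occurs precisely when $F_{S_i}\cap F_{S_j}$ is an edge, that is, when $F_{S_i}$ and $F_{S_j}$ are adjacent. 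Finally, at such a common vertex the interior angle of the polygon $C\cap\partial B_r(0)$ between its two sides equals the dihedral angle of the cone $C$ along the edge-ray $\widetilde{\Pi}_i\cap\widetilde{\Pi}_j$, which by definition is the angle $\theta$ with $\cos\theta=-\langle e_{\widetilde{\Pi}_i},e_{\widetilde{\Pi}_j}\rangle$ between the relevant normals; since $d(\exp_v)_0$ is a linear isometry and the hyperbolic dihedral angle $\angle S_iS_j$ may be measured at $v$, this equals $\angle S_iS_j$, and since the homothety $\exp_v|_{\partial B_r(0)}$ preserves angles it also equals the spherical dihedral angle between $F_{S_i}\cap\Sigma$ and $F_{S_j}\cap\Sigma$ in $\Sigma$.

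The hard part is not the geometry of a single angle but the bookkeeping in this last stage: one must check that the facets and edges of $P$ incident with $v$ correspond bijectively to the two-dimensional faces and the edge-rays of the tangent cone $C$, and that the local notion of adjacency (arcs of $L(v)$ sharing an endpoint) coincides with the global notion of adjacency of facets of $P$. This uses convexity of $P$ together with the fact that being a facet, or an edge, of $P$ is detectable in an arbitrarily small neighbourhood of $v$, so that passing to $C\cap B_r(0)$ loses no incidence information. Once that dictionary is in place, the rest is the elementary convex geometry of pointed cones and round spheres recorded above.
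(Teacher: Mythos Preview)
The paper does not give its own proof of this theorem: it is stated with the attribution ``(cf.\ \cite[Theorem~6.4.1]{R})'' and used as a black box, so there is nothing in the paper to compare your argument against beyond that citation.

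Your proposal is the standard proof, and it is essentially the one in Ratcliffe. Linearising at $v$ via $\exp_v$, identifying $P$ near $v$ with a pointed polyhedral cone $C\subset T_v\U^3$, and then reading the link off as $C\cap\partial B_r(0)$ is exactly the right mechanism; the three properties (i)--(iii) you isolate are the correct inputs, and your justification of (iii) from constant curvature is the crucial point. The bookkeeping you flag at the end---that facets and edges of $P$ through $v$ match bijectively with $2$--faces and edge--rays of $C$, and that local adjacency in $L(v)$ agrees with global adjacency of facets---is indeed the only place one must be careful, and your reasoning (convexity plus the locality of ``is a facet/edge'' near $v$) handles it. One small point worth tightening: for the spherical polygon to be a genuine polygon contained in an open hemisphere you use that $C$ is both pointed and full--dimensional; the latter holds because $P$ has nonempty interior in $\U^3$, which you could state explicitly.
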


A {\em horosphere} $\Sigma$ of $\U^3$ based at $v \in \partial \U^3$ is defined by a Euclidean sphere in $\U^3$ tangent to $\E^2$ at $v$
when $v \in \E^2$, or a Euclidean plane in $\U^3$ parallel to $\E^2$ when $v=\infty$.
If we restrict the hyperbolic metric of $\U^3$ to $\Sigma$, then it is a model of the Euclidean plane.
Similar to the previous case, it is useful to study a cut locus of $P$ by a small horosphere centered at an ideal vertex $v$
which is also called an {\em ideal vertex link} $L(v)$ of $P$ at $v$ to analyse  local geometry of $P$ at $v$:

\begin{theo}{\rm (cf. \cite[Theorem 6.4.5]{R})}
Let $v\in \partial \U^3$ be an ideal vertex of a  convex polyhedron $P$ in $\U^3$ and $\Sigma$ be a horosphere of $\U^3$ based at $v$ 
such that $\Sigma$ meets just the facets of $P$ incident with $v$. 
Then the link $L(v) := P \cap \Sigma$ of $v$ in $P$ is a  Euclidean convex polygon in the horosphere $\Sigma$.  
If $F_S$ and $F_T$ are two facets of $P$ incident with $v$, then $F_S$ and $F_T$ are adjacent facet of $P$ 
if and only if $F_S\cap \Sigma$ and $F_T \cap \Sigma$ are adjacent sides of $L(v)$. 
If $F_S$ and $F_T$ are adjacent facets of $P$ incident with $v$, then the Euclidean dihedral angle between $F_S \cap \Sigma$ and $F_T \cap \Sigma$
in $\Sigma$  is equal to the hyperbolic dihedral angle between the supporting hyperplanes $S$ and $T$ in $\U^3$. 
\label{link2}
\end{theo}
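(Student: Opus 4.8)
The plan is to reduce the statement to elementary Euclidean plane geometry by normalising the ideal vertex to $v=\infty$. Since $I(\U^3)$ acts transitively on $\partial\U^3$ (Section~\ref{upper-model}), I would first apply an isometry of $\U^3$ sending $v$ to $\infty$; this preserves the face lattice of $P$ and, because the metric $|dx|/x_3$ is conformal, preserves all hyperbolic dihedral angles, and it carries the horospheres based at $v$ to the horizontal Euclidean planes $\Sigma_c=\{x_3=c\}$. So from now on $v=\infty$ and $\Sigma=\Sigma_c$ for a large $c>0$; note that $\Sigma_c$ carries the flat metric $|dx|/c$, whose geodesics are ordinary straight lines.

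Next come the main steps. A hyperplane of $\U^3$ whose closure in $\overline{\U^3}$ contains $\infty$ must be a vertical halfplane $S=\{(x_1,x_2,x_3):(x_1,x_2)\in\ell,\ x_3>0\}$ over a Euclidean line $\ell\subset\E^2$, since Euclidean hemispheres are bounded; hence every facet $F_S$ of $P$ incident with $v$ lies on such an $S$, and $H_S\cap\Sigma_c$ is a closed Euclidean halfplane of the plane $\Sigma_c\cong\E^2$ bounded by the line $\ell\times\{c\}$. I would then check that only the incident facets matter: the boundary plane of a non-incident facet is disjoint from $P\cap\Sigma_c$ by hypothesis, while $P\cap\Sigma_c$ lies in the corresponding halfspace automatically, and a short convexity argument in $\Sigma_c$ (using that $P\cap\Sigma_c$ has nonempty interior) then shows $\bigcap_{F_S\ni v}(H_S\cap\Sigma_c)$ lies in that halfspace too. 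Therefore $L(v)=P\cap\Sigma_c=\bigcap_{F_S\ni v}(H_S\cap\Sigma_c)$ is a finite intersection of closed Euclidean halfplanes, hence a convex Euclidean polygon, bounded when $P$ has finite volume because the cusp cross-section is then compact. For the adjacency statement, an edge of $P$ incident with $v=\infty$ is a sub-ray of the vertical line $S\cap T$ over a point of $\E^2$, so it meets $\Sigma_c$ in exactly one point; hence $F_S\cap F_T$ is an edge of $P$ incident with $v$ precisely when the sides $F_S\cap\Sigma_c$ and $F_T\cap\Sigma_c$ of $L(v)$ share a vertex (for the converse, near such a shared vertex $P$ is locally the wedge $H_S\cap H_T$). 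Finally, the common geodesic $S\cap T$ of two vertical planes is orthogonal to $\Sigma_c$, so the Euclidean angle between $\ell_S\times\{c\}$ and $\ell_T\times\{c\}$ measured in $\Sigma_c$ equals the Euclidean angle between the planes $S$ and $T$, which by conformality of $|dx|/x_3$ equals the hyperbolic dihedral angle $\angle ST$.

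The part I expect to require the most care is not any single computation but the reduction in the middle step: guaranteeing that a horosphere pushed deep enough into the cusp really does avoid every facet not incident with $v$, so that $L(v)$ is cut out by the incident halfspaces alone and its combinatorial type is independent of the choice of such $\Sigma_c$. This is the horospherical counterpart of Theorem~\ref{link1} and rests on the structure of the ends of a finite-sided convex polyhedron of finite volume — in effect, that $P$ has a product neighbourhood of its cusp at $\infty$ of the form $(\text{polygon})\times(c_0,\infty)$. I would obtain it by quoting the cusp structure from \cite[Ch.~6]{R}, or by adapting \emph{mutatis mutandis} the argument that proves the spherical vertex link Theorem~\ref{link1}.
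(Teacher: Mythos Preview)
The paper does not supply its own proof of this theorem at all; it is quoted verbatim (with the label ``cf.\ \cite[Theorem 6.4.5]{R}'') as a known result from Ratcliffe's textbook and is used only as a tool in the subsequent classification. Your sketch is correct and is exactly the standard argument one finds in Ratcliffe: normalise $v$ to $\infty$, so that horospheres become horizontal planes, the supporting hyperplanes of the incident facets become vertical halfplanes, and the link becomes a Euclidean polygon with the claimed adjacency and angle properties following from conformality of the metric $|dx|/x_3$.

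One small remark on the closing paragraph: the existence of a horosphere $\Sigma$ meeting only the facets incident with $v$ is part of the \emph{hypothesis} of the theorem, not its conclusion, so you need not establish it here. What you do need (and what your ``short convexity argument'' handles) is only that under this hypothesis the non-incident halfspaces contribute nothing to $P\cap\Sigma$; the independence of the combinatorial type of $L(v)$ from the particular choice of $\Sigma$ then follows immediately, since any two admissible horospheres $\Sigma_{c_1},\Sigma_{c_2}$ are related by the vertical dilation $(x_1,x_2,x_3)\mapsto(x_1,x_2,\tfrac{c_2}{c_1}x_3)$, which fixes each incident halfspace setwise.
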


By means of this theorem we can visualize $L(v)$  of $P$ at $v$ as follows: for an ideal vertex $v \in \E^2$, applying $\varphi \in I(\U^3)$ with
$\hat{\varphi}(z)=\frac{1}{z-v}$ to $P$, we may assume that $v=\infty$.
Since the vertical projection
$$
\nu \;:\; \U^3 \rightarrow \E^2 \;;\; (x_1,x_2,x_3) \mapsto (x_1,x_2)
$$ 
maps a horosphere $\Sigma$ isometrically onto the Euclidean plane $\E^2$,
the vertex link $L(\infty)$ can be realized as a Euclidean polygon of $\E^2$, which will be a key idea for us to classify Coxeter pyramids in the next section.

\subsection{Coxeter pyramids and their classification}
\label{classification}
A convex polyhedron $P$ of finite volume in $\U^3$  is called a {\em Coxeter polyhedron} if all its dihedral angles are submultiples of $\pi $ or $0$.
Any Coxeter polyhedron can be described by a {\em Coxeter graph} defined as follows. 
The nodes of a Coxeter graph correspond to facets of $P$.
Two nodes are joined by an  $m$--labeled edge if the corresponding dihedral angle is equal to 
$\pi/m \;\; (m \geqq 3)$ or $m=\infty$. 
By convention we omit labelings for  $m=3$ and delete edges for $m=2$.
Nodes are connected  by a bold edge when the corresponding facets are parallel in $\U^3$.

Let $P \subset \U^3$ be a Coxeter pyramid, i.e. a Coxeter polyhedron with five facets
consisting of one quadrangular {\em base facet} and four triangular {\em side facets}.
We call the vertex $v$ of $P$ opposite to a base facet an {\em apex} of $P$.
Then the apex $v$ should be an ideal vertex of $P$; suppose that $v$ is in $\U^3$. 
Then by Theorem \ref{link1}, the vertex link $L(v)$ would be isometric to a spherical Coxeter quadrangle,
which contradicts to the absence of spherical Coxeter quadrangles by means of Gauss-Bonnet formula.
Therefore the apex $v$ belongs to $\partial \U^3$ and by Theorem \ref{link2}, the ideal vertex  link of $L(v)$ is isometric to a Euclidean Coxeter quadrangle, that is, a Euclidean rectangle.  

Applying isometries of $\U^3$, we will normalize a Coxeter pyramid $P$ as follows:
we may assume that the apex $v$ is $\infty$, the support plane of the base facet is the hemisphere of radius one centered at the origin of $\E^2$,
and side facets are parallel to $x_1$-axis or $x_2$-axis of $\E^2$.
The vertex link $L(\infty)$ of the apex $\infty$ of $P$ is projected on $\E^2$ by the vertical projection
$\nu \;:\; \U^3 \rightarrow \E^2$,  
and we call its image $\nu (L(\infty))$ the {\em projected link}, which is a rectangle in the closed unit disk in $\E^2$,
whose edges are parallel to $x_1$-axis or $x_2$-axis.
Let us call each edge bounding the projected link $\nu (L(\infty))$ $A, B, C$ and $D$ (see Fig.\ \ref{planeABCD}), and denote by $H_A, H_B, H_C$ and $H_D$ 
the corresponding hyperplanes of $\U^3$.
And let us denote by $H_{b}:=H_{base}$ the base facet.
Then we have
\begin{equation}
\nu(L(\infty)) \subset \nu(\overline{H_b}).
\label{eqABCD}
\end{equation}

\begin{figure}[htbp]
\begin{center}
 \includegraphics [width=170pt, clip]{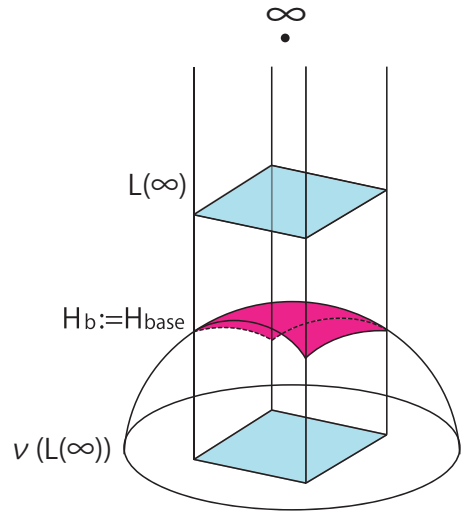}
\end{center}
\caption{A pyramid in $\U^3$ with an apex $\infty$}
\label{upperhalf}
\end{figure} 

 \begin{figure}[htbp]
\begin{center}
 \includegraphics [width=200pt, clip]{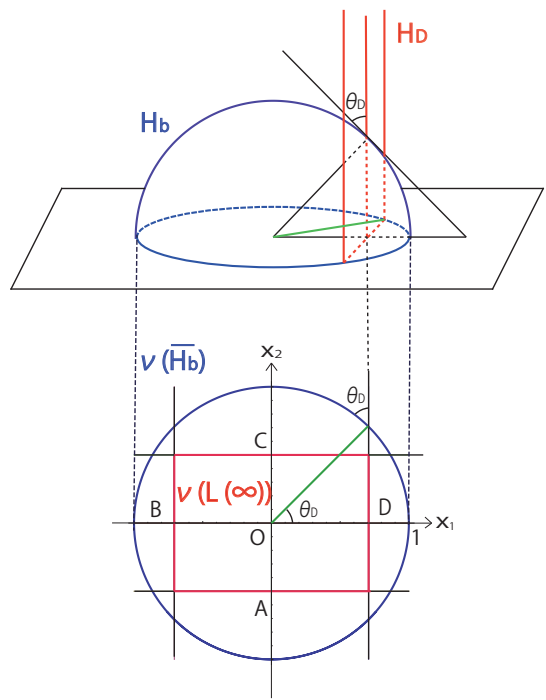}
\end{center}
\caption{The image of the link $L(\infty)$ and the base facet $H_b$ under the projection $\nu$, and the dihedral angle $\theta_D=\angle H_bH_D$}
\label{planeABCD}
\end{figure}  
Let us denote by $\theta_i=\angle H_iH_b$ the dihedral angle between the associated hyperplanes $H_i \subset \U^3$ $(i=A,B,C,D)$ supporting the side facets and the hyperplane $H_b$ supporting the bace facet.
Note that the Euclidean distance $d_{\E^2}(O, i)$ between the origin $O$ and the edge $i$ satisfies $d_{\E^2}(O,i)=\cos \theta_i$(see Fig.\ \ref{planeABCD}).

Applying isometries of $\U^3$ which induce a rotation of $\E^2$ centered at the origin or a reflection of $\E^2$ with respect to a line through the origin,
we may assume that  the dihedral angles of a Coxeter pyramid, say $\theta_A=\pi/k$, $\theta_B=\pi/m$, $\theta_C=\pi/\ell$, $\theta_D=\pi/n$ satisfy
\begin{equation}
k \leq \ell, \;\;\; 
m \leq n, \;\;\;
k \leq m, \;\;\;
\text{and} \;\; \ell \leq n \;\;   \text{when} \;\;   k=m.
\label{klmn}
\end{equation}

Summarising,
\begin{defi}\rm
A Coxeter pyramid $P$ is called {\em normalized} if
\begin{itemize}
\item
the apex of $P$ is $\infty$.
\item
The support plane of the base facet $H_b$ is the hemisphere of radius one centered at the origin of $\E^2$.
\item
Side facets $H_A$ and $H_C$ are parallel to $x_1$-axis 
while side facets $H_B$ and $H_D$ are parallel to $x_2$-axis of $\E^2$.
\end{itemize}
\end{defi}
It is easy to see that there is a unique normalized Coxeter pyramid
in its isometry class.

Now we can classify Coxeter pyramids in terms of
Coxeter graphs by means of conditions (\ref{eqABCD}) and (\ref{klmn}).
It should be remarked that these graphs were first appeared in \cite[Table 2]{T}. 

\begin{theo}
Coxeter pyramids in $\U^3$ can be classfied by Coxeter graphs in {\rm Fig.\ \ref{5gene-pyramid}} up to isometry.

\begin{figure}[htbp]
\begin{center}
 \includegraphics [width=300pt, clip]{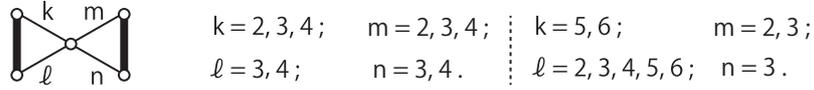}
\end{center}
\caption{Coxeter graphs of Coxeter pyramids  in $\H^3$}
\label{5gene-pyramid}
\end{figure} 
\label{classify}
\end{theo}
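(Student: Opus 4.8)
The plan is to turn the geometric description of a normalized Coxeter pyramid into arithmetic conditions on the four integers $k,\ell,m,n$ and then to enumerate. By the normalization above together with Theorems~\ref{link1} and~\ref{link2}, a Coxeter pyramid is, up to isometry, the part of the vertical prism over the rectangle $\nu(L(\infty))$ that is cut off by the unit hemisphere $H_b$; its five facets are $H_b$ and the vertical planes $H_A,H_C$ (parallel to the $x_1$-axis) and $H_B,H_D$ (parallel to the $x_2$-axis), the dihedral angle between any two adjacent side facets equals $\pi/2$ (since $L(\infty)$ is a Euclidean rectangle), the two side facets in each opposite pair are parallel in $\U^3$, and $\theta_A=\pi/k$, $\theta_B=\pi/m$, $\theta_C=\pi/\ell$, $\theta_D=\pi/n$ satisfy $d_{\E^2}(O,i)=\cos\theta_i$ and are normalized by (\ref{klmn}). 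In particular the Coxeter graph of a Coxeter pyramid is completely determined by the quadruple $(k,\ell,m,n)$, and by the uniqueness of the normal form two Coxeter pyramids are isometric if and only if they give the same quadruple; so the theorem reduces to deciding which quadruples actually occur.

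Next I would read off the constraints. A Euclidean line at distance $1$ from $O$ is tangent to the unit circle, so it cannot support a non-degenerate side of a rectangle contained in the closed unit disk; hence no $\theta_i$ is $0$ and $k,\ell,m,n\in\{2,3,4,\dots\}$. For the five facets to be distinct, the two side planes parallel to the $x_1$-axis, which are vertical planes at $\E^2$-distances $\cos(\pi/k)$ and $\cos(\pi/\ell)$ from $O$, must differ, and this is possible exactly when those distances are not both $0$, i.e.\ not $k=\ell=2$; with the analogous condition in the $x_2$-direction and with (\ref{klmn}), this forces $\ell\ge 3$ and $n\ge 3$. Finally, (\ref{eqABCD}) says that $\nu(L(\infty))$ lies in the closed unit disk, equivalently that all four of its vertices are at $\E^2$-distance $\le 1$ from $O$; as those distances equal $\sqrt{\cos^2(\pi/p)+\cos^2(\pi/q)}$ for $p\in\{k,\ell\}$ and $q\in\{m,n\}$, the ordering (\ref{klmn}) reduces the requirement to the single inequality
\[
\cos^2\!\left(\frac{\pi}{\ell}\right)+\cos^2\!\left(\frac{\pi}{n}\right)\le 1 .
\]
Conversely, every quadruple obeying these conditions is realized by a genuine finite-volume Coxeter pyramid: the part of the prism over $\nu(L(\infty))$ cut off by $H_b$ has exactly the five prescribed facets with the prescribed dihedral angles, and its closure in $\overline{\U^3}$ meets $\partial\U^3$ only at $\infty$ and at the finitely many vertices of $\nu(L(\infty))$ lying on the unit circle, so it has finite volume.

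It then remains to enumerate. From $\cos^2(\pi/\ell)\ge\cos^2(\pi/3)=\tfrac14$ and the displayed inequality one gets $\cos^2(\pi/\ell)\le\tfrac34=\cos^2(\pi/6)$, so $\ell\le 6$, and by symmetry $n\le 6$; together with $k\le\ell$ and $m\le n$ only finitely many quadruples remain, and a routine case analysis using $\cos^2(\pi/3)=\tfrac14$, $\cos^2(\pi/4)=\tfrac12$, $\cos^2(\pi/5)=\tfrac{3+\sqrt5}{8}$, $\cos^2(\pi/6)=\tfrac34$ and (\ref{klmn}) singles out precisely the quadruples recorded in Fig.~\ref{5gene-pyramid}. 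Rewriting each admissible quadruple as its Coxeter graph finishes the proof.

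The step I expect to need the most care is the dictionary in the second paragraph, i.e.\ the claim that a finite-volume Coxeter pyramid in normal form corresponds \emph{exactly} to a non-degenerate axis-parallel rectangle inscribed in the closed unit disk with the prescribed distances of its sides from $O$: the three equivalences (no $\theta_i=0$; non-degeneracy $\Leftrightarrow\ell,n\ge 3$; the rectangle lies in the disk $\Leftrightarrow$ the displayed inequality), together with the converse construction and the verification that its volume is finite. Once this dictionary is established the remaining enumeration is a short finite computation, which moreover can be cross-checked against \cite[Table~2]{T}.
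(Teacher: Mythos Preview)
Your proposal is correct and follows essentially the same approach as the paper: reduce the classification to the condition that the projected link $\nu(L(\infty))$ is an axis-parallel rectangle contained in the closed unit disk, and then enumerate the admissible quadruples $(k,\ell,m,n)$ subject to the normalization~(\ref{klmn}). The paper carries out the enumeration pictorially by drawing the families of lines $d_{\E^2}(O,i)=\cos(\pi/p)$ for $k=2,3,4$ and reading off which rectangles fit in the disk, whereas you extract the single governing inequality $\cos^2(\pi/\ell)+\cos^2(\pi/n)\le 1$ and enumerate algebraically; this is the same argument in different clothing, and your version has the advantage of being self-contained without relying on the figures.
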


\begin{proof}
We determine all $(k, \ell, m, n)$ satisfying conditions (\ref{eqABCD}) and (\ref{klmn}).
From Figure.\ \ref{planepyramid22},  the range of $k$ for $\theta_A=\pi/k$ is $k=2,3,4$.
We can easily find $(k, \ell, m, n)$ satisfying conditions (\ref{eqABCD}) and (\ref{klmn})
from Figures. \ref{planepyramid22},  \ref{planepyramid3} and \ref{planepyramid4}
for $\theta_A=\pi/k$ with $k=2,3$ and $4$ respectively.
\end{proof}

 \begin{figure}[h]
\begin{center}
 \includegraphics [width=200pt, clip]{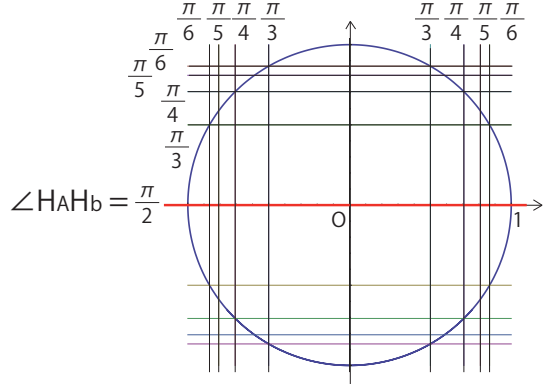}
\end{center}
\caption{The case that $\theta_A=\pi/k=\pi/2$}% ("Žš'ð'å'«'­'·'éB$\angle H_AH_b$'̏ꏊ'𖾋L'·'éB)}
\label{planepyramid22}
\end{figure} 

\begin{figure}[htbp]
\begin{center}
 \includegraphics [width=200pt, clip]{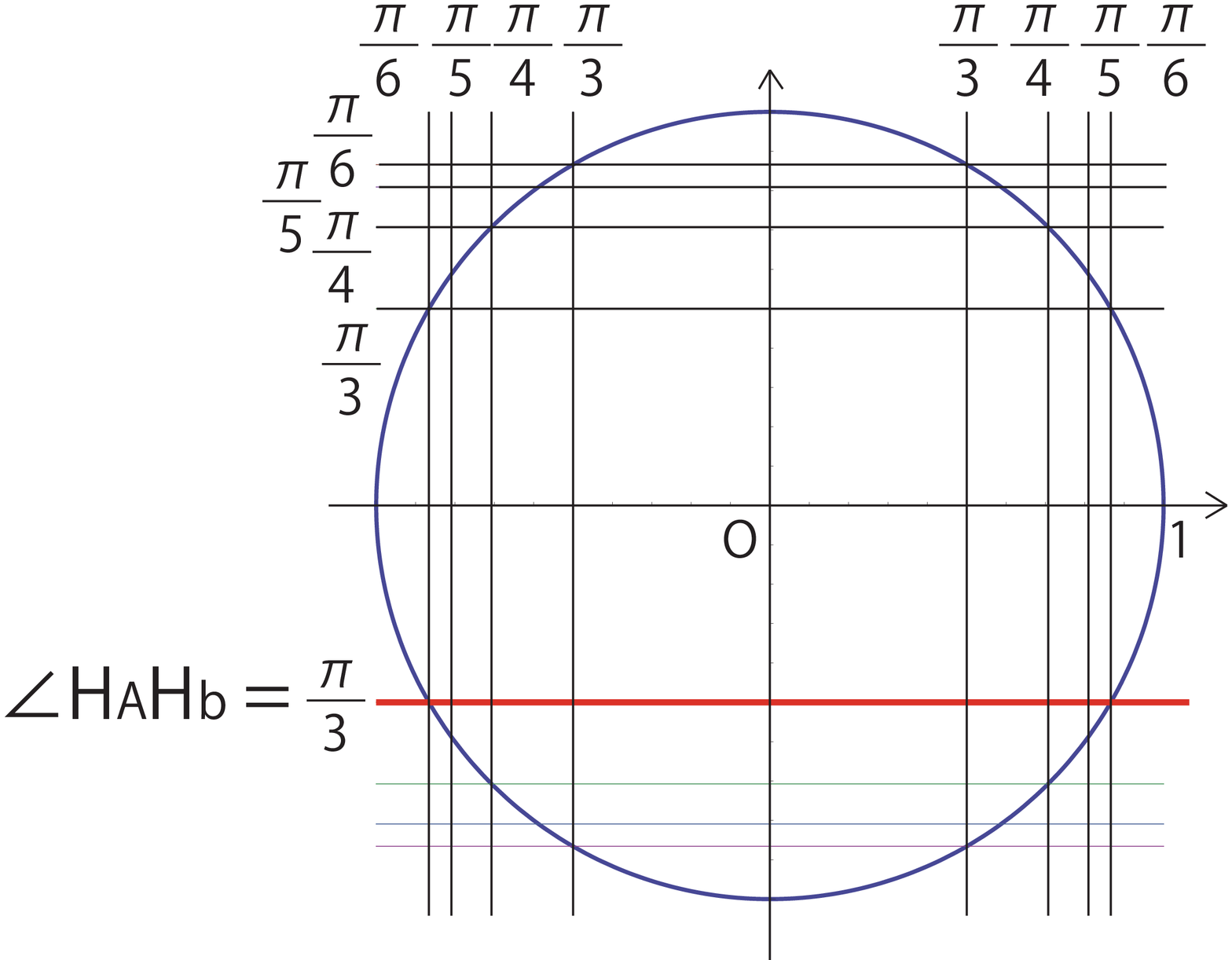}
\end{center}
\caption{The case that $\theta_A=\pi/k=\pi/3$}
\label{planepyramid3}
\end{figure} 

\begin{figure}[htbp]
\begin{center}
 \includegraphics [width=200pt, clip]{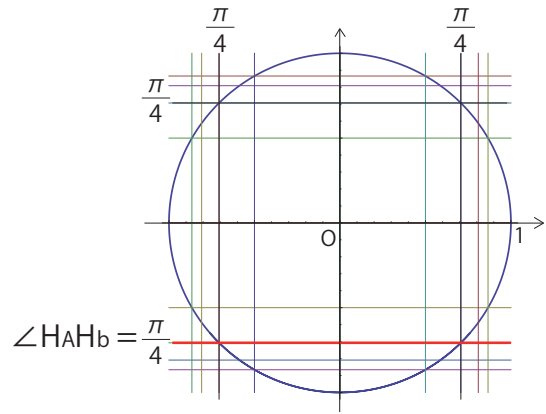}
\end{center}
\caption{The case that $\theta_A=\pi/k=\pi/4$}
\label{planepyramid4}
\end{figure} 

\section{The growth of Coxeter pyramids}
\subsection{Growth functions of Coxeter groups}
From now on, we consider the growth series of a Coxeter group $(G,S)$ where $S$ is its natural set of generating reflections.
In practice, the growth series $f_S(t)$, which is analytic on $|t| < R$ where $R$ is the radius of convergence of $f_S(t)$, extends to a rational function $P(t)/Q(t)$ on $\mathbb{C}$ by analytic continuation where $P(t), Q(t)\in \mathbb{Z}[t]$ are relatively prime. 
There are formulas due to Solomon and Steinberg to calculate the rational function $P(t)/Q(t)$ from the Coxeter graph of $(G,S)$ (\cite{So, St}; see also \cite{Hu}), and we call this rational function the {\em growth function} of $(G,S)$.

\begin{theo}{\rm (Solomon's formula)}\\
The growth function $f_S(t)$ of an irreducible finite Coxeter group $(G, S)$ can be written as 
$f_S(t)=\prod_{i=1}^k[m_i+1]$ where $[n]:=1+t+ \cdots +t^{n-1}$ and $\{m_1, m_2, \ldots, m_k \}$
is the set of exponents of $(G, S)$ {\rm (see \cite{Hu})}.
\end{theo}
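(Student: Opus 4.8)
The plan is to leave the purely combinatorial world of $(G,S)$ and bring in the geometry of the reflection representation together with the Chevalley--Shephard--Todd invariant theory of finite reflection groups. Since both sides of the asserted identity are multiplicative under direct products, I may assume $G$ is irreducible; the genuinely important hypothesis is that $G$ is finite, so that $f_S(t)=\sum_{w\in G}t^{\ell_S(w)}$ is in fact a polynomial, the \emph{Poincar\'e polynomial} of $(G,S)$, and the number $k$ of exponents equals $n:=|S|$. First I would realize $G$ as a finite group generated by orthogonal reflections of a Euclidean space $V$ with $\dim V=n$, let $\R[V]=\bigoplus_{j\ge 0}\R[V]_j$ be the polynomial algebra graded by degree, and recall Chevalley's theorem: the invariant subalgebra $\R[V]^G$ is itself a polynomial algebra $\R[f_1,\dots,f_n]$ with $f_i$ homogeneous of degree $d_i$, the degrees $d_1\le\cdots\le d_n$ depend only on $G$, the exponents are $m_i=d_i-1$, and $\R[V]$ is a \emph{free} $\R[V]^G$-module of rank $|G|$.

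Granting that, the first real step is to compute the Hilbert series of the coinvariant algebra $A:=\R[V]/(f_1,\dots,f_n)$. Freeness of $\R[V]$ over $\R[V]^G$ gives, upon choosing a homogeneous module basis, a graded vector-space isomorphism $\R[V]\cong \R[V]^G\otimes_{\R}A$, whence
\[
\mathrm{Hilb}(A,t)=\frac{\mathrm{Hilb}(\R[V],t)}{\mathrm{Hilb}(\R[V]^G,t)}=\frac{(1-t)^{-n}}{\prod_{i=1}^n(1-t^{d_i})^{-1}}=\prod_{i=1}^n\frac{1-t^{d_i}}{1-t}=\prod_{i=1}^n[m_i+1].
\]
So the theorem is reduced to the identity $f_S(t)=\mathrm{Hilb}(A,t)$, i.e.\ to showing that the length generating function of $G$ equals the Hilbert series of its coinvariant algebra.

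The decisive step is therefore to exhibit a homogeneous $\R$-basis of $A$ indexed by the elements $w\in G$ in such a way that the basis element attached to $w$ sits in degree exactly $\ell_S(w)$. When $G$ is a Weyl group this is classical: by Borel's theorem $A$ is isomorphic as a graded algebra, after halving degrees, to $H^{*}(\mathcal{G}/\mathcal{B};\R)$, and the Bruhat decomposition of the flag variety supplies one Schubert class $\sigma_w$ in cohomological degree $2\ell_S(w)$ for each $w\in G$, these classes forming a basis; counting them degree by degree yields $\mathrm{Hilb}(A,t)=\sum_{w\in G}t^{\ell_S(w)}=f_S(t)$. For a general finite Coxeter group I would replace the flag variety by the algebraic counterpart: using the divided-difference (Demazure / Bernstein--Gelfand--Gelfand) operators $\partial_s$ attached to the simple reflections one builds, inside $A$, a Schubert-type (``descent'') basis with exactly the same degree-versus-length bookkeeping. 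Either construction, combined with the Hilbert-series computation above, gives $f_S(t)=\prod_{i=1}^{k}[m_i+1]$.

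I expect this last step to be the main obstacle, and it is where the combinatorics of Coxeter groups --- reduced words, the exchange condition, the Bruhat order --- really enters. The cheap half of the relationship between $A$ and $G$, namely that $A$ carries the regular representation of $G$ so that $\dim_{\R}A=|G|=\prod_{i}d_i$, only recovers the desired identity after setting $t=1$; promoting it to an equality of polynomials forces one to keep track of the internal grading of $A$, precisely the content of the Schubert/descent basis. The remaining ingredients --- reducing to the irreducible case, Chevalley's structure theorem, and the manipulation of Hilbert series --- are routine.
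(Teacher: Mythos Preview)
The paper does not prove this theorem at all: it is recorded as a classical result attributed to Solomon, with a pointer to Humphreys' textbook, and is then used as a black box in the subsequent growth-function computations. Your proposal therefore goes well beyond what the paper itself does.

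That said, your outline is sound and follows one of the standard routes to the result. The reduction to the Hilbert series of the coinvariant algebra $A=\R[V]/(f_1,\dots,f_n)$ via Chevalley's theorem and the freeness of $\R[V]$ over $\R[V]^G$ is correct and yields $\mathrm{Hilb}(A,t)=\prod_{i=1}^{n}[m_i+1]$ cleanly. You are also right that the substantive step is the identification $\mathrm{Hilb}(A,t)=f_S(t)$, and that the BGG/Demazure divided-difference operators furnish, for every finite real reflection group (including the non-crystallographic $H_3$, $H_4$, $I_2(m)$), a homogeneous basis $\{X_w\}_{w\in G}$ of $A$ with $\deg X_w=\ell_S(w)$; this is carried out, for instance, in Hiller's book. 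One small terminological point: the phrase ``descent basis'' usually refers to a different basis of $A$ (of Garsia--Stanton type), graded by major index rather than by length, so ``Schubert basis'' is the safer name for what you want here.
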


We give explicitly the growth functions of irreducible finite Coxeter groups in  Table \ref{tab: exponents}
where we use the notation $[n,m]=[n][m]$ (see \cite{Hu, KP}).

\begin{table}[htbp]
\begin{center}
\begin{tabular}{|c|c|c|}
\hline
Symbol & Exponents & $f_S(t)$\\
\hline
$A_n$ & $1,2, \ldots, n$ & $[2,3, \ldots, n+1]$\\
\hline
$B_n$ & $1,3, \ldots, 2n-1$ & $[2,4, \ldots, 2n]$\\
\hline
$D_n$ & $1,3, \ldots, 2n-3, n-1$ & $[2,4, \ldots, 2n-2][n]$\\
\hline
$E_6$ & $1,4,5,7,8,11$ & $[2,5,6,8,9,12]$\\
\hline
$E_7$ & $1,5,7,9,11,13,17$ & $[2,6,8,10,12,14,18]$\\
\hline
$E_8$ & $1,7,11,13,17,19,23,29$ & $[2,8,12,14,18,20,24,30]$\\
\hline
$F_4$ & $1,5,7,11$ & $[2,6,8,12]$\\
\hline
$H_3$ & $1,5,9$ & $[2,6,10]$\\
\hline
$H_4$ & $1,11,19,29$ & $[2,12,20,30]$\\
\hline
$I_2(m) $ & 1, $m-1$ & $[2,m]$\\
\hline
\end{tabular}
\end{center}
\caption{The growth functions of irreducible finite Coxeter groups}
\label{tab: exponents}
\end{table}
\begin{theo}{\rm (Steinberg's formula)}\\
For a Coxeter group $(G, S)$,
let us denote by $(G_T,T)$ the Coxeter subgroup of $(G, S)$ generated by a subset $T\subseteq S$, and let its growth function be $f_T(t)$.
Set $\mathcal{F}=\{T\subseteq S \;:\; G_T$ is finite $\}$. Then
$$
\frac{1}{f_S(t^{-1})}=\sum _{T \in \mathcal{F}} \frac{(-1)^{|T|}}{f_T(t)}.
$$
\end{theo}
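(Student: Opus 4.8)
The plan is to reproduce the classical combinatorial proof of Steinberg's formula (see \cite{St}, or \cite{Hu}), so I would only sketch the steps. Throughout I write $\ell=\ell_S$ and, for $g\in G$, use its right descent set $D(g):=\{s\in S\;:\;\ell(gs)<\ell(g)\}$. First I would recall two structural facts about a Coxeter system $(G,S)$. For any $T\subseteq S$, each $g\in G$ factors uniquely as $g=g^Tg_T$ with $g_T\in G_T$ and $g^T$ the shortest element of the coset $gG_T$, and then $\ell(g)=\ell(g^T)+\ell(g_T)$, more generally $\ell(g^Th)=\ell(g^T)+\ell(h)$ for all $h\in G_T$; summing $t^{\ell(g)}$ over $G$ this yields $f_S(t)=f^T(t)\,f_T(t)$, where $f^T(t):=\sum_{g^T}t^{\ell(g^T)}$, hence $f^T(t)=f_S(t)/f_T(t)$. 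Moreover, when $G_T$ is finite it has a longest element $w_0(T)$, and $g_T\mapsto w_0(T)g_T$ reverses length on $G_T$; summing $t^{\ell(\cdot)}$ over $G_T$ gives the palindromic identity $f_T(t)=t^{\ell(w_0(T))}f_T(t^{-1})$. I would also invoke the elementary fact that an element whose descent set is all of its generating set is necessarily the longest element, so that such an element exists only for finite groups --- this is the one place the finiteness condition defining $\mathcal{F}$ is used.

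Then I would carry out the combinatorial heart, an inclusion--exclusion over descent sets. Since $1_G$ is the unique element with empty descent set and $\sum_{U\subseteq D(g)}(-1)^{|U|}$ equals $1$ if $D(g)=\varnothing$ and $0$ otherwise, interchanging the order of summation gives
$$
1=\sum_{g\in G}t^{\ell(g)}\sum_{U\subseteq D(g)}(-1)^{|U|}=\sum_{U\subseteq S}(-1)^{|U|}\sum_{\substack{g\in G\\ U\subseteq D(g)}}t^{\ell(g)}.
$$
For the inner sum I would use the factorization $g=g^Ug_U$ to see that, for $s\in U$, $s\in D(g)$ holds exactly when $s$ is a descent of $g_U$ inside $G_U$; hence $U\subseteq D(g)$ forces $g_U$ to have full descent set in $G_U$, which by the lemma requires $G_U$ finite and $g_U=w_0(U)$. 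Thus the inner sum vanishes unless $U\in\mathcal{F}$, and for $U\in\mathcal{F}$ it equals $t^{\ell(w_0(U))}f^U(t)$. Substituting $f^U(t)=f_S(t)/f_U(t)$ and rewriting $t^{\ell(w_0(U))}/f_U(t)=1/f_U(t^{-1})$ by the palindromic identity, I would arrive at
$$
1=f_S(t)\sum_{T\in\mathcal{F}}\frac{(-1)^{|T|}}{f_T(t^{-1})}.
$$
Dividing by $f_S(t)$ and then replacing $t$ by $t^{-1}$ gives the asserted formula; all manipulations are legitimate, performed first at the level of formal power series in $t$ and then recognized as identities of the rational functions $f_S,f_T$ (each with value $1$ at $t=0$).

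The step I expect to be the main obstacle is the descent-set bookkeeping for the inner sum --- in particular the verification that infinite parabolic subgroups contribute nothing, which is exactly what forces the restriction to $\mathcal{F}$. This rests on the longest-element lemma together with the compatibility of $\ell$ with coset factorizations; once these are in place the remainder is formal manipulation of generating functions.
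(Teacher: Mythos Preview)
The paper does not actually prove Steinberg's formula; it is stated as a classical result with references to \cite{St} and \cite{Hu}, and is then used as a computational tool. Your sketch is correct and is precisely the standard combinatorial argument found in those references: coset factorisation $g=g^Tg_T$ giving $f_S=f^T f_T$, the palindromic identity for finite parabolics via $w_0(T)$, inclusion--exclusion over right descent sets, and the lemma that an element with full descent set exists only in a finite Coxeter group (which is what kills the contributions from $U\notin\mathcal{F}$). The only point worth making explicit when you write it up is the passage from formal power series to rational functions at the very end: the identity
\[
\frac{1}{f_S(t)}=\sum_{T\in\mathcal{F}}\frac{(-1)^{|T|}}{f_T(t^{-1})}
\]
holds first in $\mathbb{Z}[[t]]$ (each $f_T$ is a polynomial with constant term $1$, hence a unit, and $1/f_T(t^{-1})=t^{\ell(w_0(T))}/f_T(t)$ is a genuine power series), and since the right-hand side is visibly rational this shows $f_S$ is rational, after which the substitution $t\mapsto t^{-1}$ is legitimate.
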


\subsection{Growth rates of Coxeter groups}
It is known that growth rates of hyperbolic Coxeter groups are bigger than $1$ (\cite{dlH1}). 
The formulas of Solomon and Steinberg imply that $P(0)=1$, and hence $a_0=1$ means that $Q(0)=1$.
Therefore the growth rate $\tau >1$ becomes a real algebraic integer. 
If  $t=R$ is the unique pole of $f_S(t)=P(t)/Q(t)$ on the circle $|t|=R$, then $\tau >1$ is a real algebraic integer all of whose conjugates have strictly smaller absolute values.
Such a real algebraic integer is called a {\em Perron number}.

For  two and three--dimensional cocompact hyperbolic Coxeter groups (having compact fundamental polyhedra), Cannon--Wagreich and Parry showed that their growth rates are Salem numbers (\cite{CW, P}),
where a real algebraic integer $\tau>1$ is called a  {\em Salem number} if $\tau^{-1}$ is an algebraic conjugate of $\tau$ and all
 algebraic conjugates of $\tau$ other than $\tau$ and $\tau^{-1}$ lie on the unit circle.
It follows from the definition that a Salem number is a Perron number.

Kellerhals and Perren calculated the growth functions of all four--dimensional cocompact hyperbolic Coxeter groups with at most 6 generators 
and checked numerically that their growth rates are not Salem numbers anymore while
they are Perron numbers (\cite{KP}).

For the noncompact case,
Floyd proved that the growth rates of  two--dimensional cofinte hyperbolic Coxeter groups
are {\em Pisot--Vijayaraghavan numbers}, where a real algebraic integer $\tau>1$ is called a Pisot--Vijayaraghavan number
if all algebraic conjugates of $\tau$ other than $\tau$  lie in the open unit disk (\cite{F}).
A Pisot--Vijayaraghavan number is also a Perron number by definition.

From these results for low-dimensional cases, 
Kellerhals and Perren conjectured that the growth rates  of  hyperbolic Coxeter groups are Perron numbers in general.

\subsection{Calculation of the growth functions}
\label{growth-rates}

Finding all finite Coxeter subgraphs of a Coxeter diagram for a Coxeter pyramid classified in the previous section,
we can calculate growth  functions of  Coxeter pyramids
by means of formulas due to Solomon and Steinberg. 
The denominator polynomials of growth functions of  Coxeter pyramids are listed below,
where $(k,\ell,m,n)$ represents the Coxeter graph in Fig.\ \ref{5gene-pyramid}.
In Table \ref{allvolumes} we also collect  all growth rates of  Coxeter pyramids,
 which arise as the inverse of the smallest positive roots of denominator polynomials of the growth functions.

\begin{enumerate}
 \item $(k,\ell,m,n)=(2,3,2,3): \;(t-1) (t^5+2 t^4+2 t^3+t^2-1)$
 \item $(2,3,2,4): \;(t-1) (t^7+t^6+2 t^5+t^4+2 t^3+t-1)$
  \item $(2,3,2,5):\;(t-1) (t^{13}+t^{12}+2 t^{11}+2 t^{10}+3 t^9+2 t^8+3 t^7+2 t^6+3 t^5+t^4+2 t^3+t-1)$
 \item $(2,3,3,3): \;(t-1) (t^4+2 t^3+t^2+t-1)$
  \item $(2,4,2,4): \;(t-1) (t^4+2 t^3+t^2+t-1)$ 
 \item $(2,3,2,6): \;(t-1) (t^6+2 t^5+t^4+t^3+t^2+t-1)$
 \item $(2,3,3,4): \;(t-1) (t^7+2 t^6+2 t^5+2 t^4+2 t^3+t^2+t-1)$
 \item $(2,4,3,3): \;(t-1) (t^7+2 t^6+2 t^5+3 t^4+2 t^3+t^2+t-1)$
  \item $(2,3,3,5): \;(t-1) (t^{15}+2 t^{14}+3 t^{13}+5 t^{12}+5 t^{11}+7 t^{10}+6 t^9+7 t^8+6 t^7+6 t^6+5 t^5+3 t^4+3 t^3+t-1)$
 \item $(2,3,3,6): \;(t-1)(t^8+2 t^7+3 t^6+3 t^5+3 t^4+2 t^3+t^2+t-1)$
 \item $(2,5,3,3): \;(t-1) (t^9+t^8+2 t^6+t^4+t^3+2 t-1)$
  \item $(2,3,4,4): \;(t-1) (t^5+t^4+t^3+2 t-1)$
   \item $(2,6,3,3): \;(t-1) (2 t^5+t^4+t^3+2 t-1)$
   \item $(2,3,4,5): \;(t-1) (t^{13}+t^{12}+2 t^{11}+2 t^{10}+3 t^9+2 t^8+3 t^7+2 t^6+3 t^5+t^4+3 t^3-t^2+2 t-1)$
  \item $(2,4,3,4): \;(t-1) (t^8+2 t^7+3 t^6+3 t^5+3 t^4+3 t^3+t^2+t-1)$
    \item $(2,3,4,6): \;(t-1) (t^8+2 t^7+3 t^6+4 t^5+3 t^4+3 t^3+t^2+t-1)$
  \item $(2,3,5,5) : \;(t-1) (t^{11}+t^{10}+t^9+2 t^8+t^7+2 t^6+t^5+2 t^4+t^3+2 t-1)$
  \item $(2,3,5,6): \; (t-1) (t^{14}+2 t^{13}+3 t^{12}+4 t^{11}+5 t^{10}+5 t^9+5 t^8+5 t^7+5 t^6+5 t^5+3 t^4+3 t^3+t^2+t-1)$
   \item $(3,3,3,3): \;(t-1) (t^2+2 t-1)$
    \item $(2,3,6,6): \;(t-1) (2 t^6+3 t^5+2 t^4+2 t^3+2 t^2+t-1)$
 \item $(2,4,4,4): \;(t-1) (2 t^4+3 t^3+2 t^2+t-1)$
 \item $(3,3,3,4): \;(t-1) (t^5+2 t^4+t^2+2 t-1)$
  \item $(3,3,3,5): \;(t-1) (t^9+t^8-t^7+3 t^6-t^5+t^4+2 t^3-2 t^2+3 t-1)$
   \item $(3,3,3,6): \:(t-1) (2 t^7+t^6+4 t^5+t^4+3 t^3+2 t-1)$
 \item $(3,3,4,4): \;(t-1) (t^5+2 t^4+t^3+t^2+2 t-1)$
 \item $(3,4,3,4): \;(t-1) (t^6+t^5+2 t^4+t^3+t^2+2 t-1)$
  \item $(3,3,4,5): \;(t-1) (t^9+t^8+2 t^6+3 t^3-2 t^2+3 t-1)$ 
  \item $(3,3,4,6): \;(t-1) (2 t^8+3 t^7+5 t^6+6 t^5+5 t^4+4 t^3+2 t^2+t-1)$ 
 \item $(3,3,5,5): \;(t-1) (t^7+t^6-t^5+2 t^4-t^2+3 t-1)$
 \item $(3,3,5,6):\;(t-1) (2 t^{10}+t^9+2 t^8+t^7+2 t^6+2 t^5+t^4+2 t^3+t^2+2 t-1)$
  \item $(3,3,6,6): \;(t-1) (4 t^5+t^4+2 t^3+t^2+2 t-1)$ 
 \item $(3,4,4,4): \;(t-1) (2 t^6+t^5+2 t^4+2 t^3+t^2+2 t-1)$
 \item $(4,4,4,4): \;(t-1) (4 t^3+t^2+2 t-1)$
 \end{enumerate}
 
\subsection{Arithmetic property of  growth rates}
\label{main-results}
Now we state the main theorem for  growth rates of Coxeter pyramids.

\begin{theo}
The growth rates of  Coxeter pyramids are Perron numbers.
\end{theo}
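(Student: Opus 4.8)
The plan is to deduce the statement directly from the explicit list of denominator polynomials together with the Perron--Frobenius theorem, reducing the arithmetic claim to a purely algebraic property of each of the thirty-three polynomials. Write $f_S(t)=P(t)/Q(t)$ in lowest terms; by the list above $Q(t)=(t-1)\,q(t)$, where $q(t)$ is the displayed parenthesized factor, and since $G$ is infinite one has $\tau>1$ by \cite{dlH1}, so the radius of convergence $R$ satisfies $R<1$. As every root of $Q$ is a pole of $f_S$, the number $R$ is the smallest modulus of a root of $Q$; being $<1$ it is not the root $t=1$, hence it is the smallest modulus of a root of $q$, and since $q(0)=-1<0$ while the leading coefficient of $q$ is positive, $R\in(0,1)$ is in fact the smallest positive real root of $q$ and $\tau=R^{-1}$. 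The minimal polynomial of the algebraic integer $\tau$ divides the monic integer polynomial $\widetilde Q(t)=t^{\deg Q}Q(t^{-1})$, whose roots are $1$ together with the reciprocals of the roots of $q$. Therefore it suffices to prove, for each of the thirty-three polynomials, that $R$ is the \emph{unique} root of $q$ of smallest modulus: then $\tau$ is the unique root of $\widetilde Q$ of largest modulus, a fortiori the unique conjugate of $\tau$ of largest modulus, which is exactly the Perron condition.

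To establish this uniqueness I would split the list into two groups. For every case except the four graphs $(2,3,4,5)$, $(3,3,3,5)$, $(3,3,4,5)$, $(3,3,5,5)$, all coefficients of $q(t)$ other than its constant term $-1$ are nonnegative integers, so $-\widetilde q(t)=t^{d}-a_1t^{d-1}-\dots-a_{d}$ with $a_1,\dots,a_d\ge 0$ and $a_d>0$, where $d=\deg q$ and $a_k$ is the coefficient of $t^{k}$ in $q$. This is the characteristic polynomial of the companion matrix $C$, a nonnegative integer matrix, and the lengths of the cycles in its associated digraph are exactly the $k$ with $a_k>0$; since $d$ always occurs and, in all but one of these cases, so does $1$ (the coefficient of $t$ in $q$ being positive), $C$ is primitive, the single remaining case $(2,3,2,3)$ being checked by hand. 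The Perron--Frobenius theorem then gives a simple positive real eigenvalue of $C$ strictly dominating all others in modulus; this eigenvalue can only be $\tau=R^{-1}$, which yields the desired uniqueness, and incidentally shows $t=R$ is a simple pole of $f_S$.

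The remaining four polynomials are precisely those in which a dihedral angle $\pi/5$ appears together with three ``large'' angles, and they carry negative intermediate coefficients; handling them is the step I expect to be the real obstacle, since the companion matrix is no longer nonnegative. Here I would proceed in one of two equivalent ways. One can try to factor $q(t)=\Phi(t)\,r(t)$ with $\Phi$ a product of cyclotomic polynomials --- plausible because the Steinberg sum for these graphs involves the factor $[2,5]$ coming from $I_2(5)$ --- so that all roots of $\Phi$ have modulus $1>R$ while $r(t)$ is of the nonnegative type just treated; then $R$ is again the unique smallest-modulus root of $q$. If no such clean cyclotomic factorization is available, one instead localizes the roots rigorously: compute $\tau$ to within a certified interval, and bound every other root of $\widetilde q$ strictly below $\tau$ in modulus, for instance by an Enestr\"om--Kakeya or Cauchy estimate applied to the deflated polynomial $\widetilde q(t)/(t-\tau)$, or by an argument-principle count on a circle $|t|=\rho$ with $R<\rho<1$. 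Root-localization estimates of exactly this kind were developed in our earlier paper \cite{KU}, which we invoke to close these four cases; assembling the two groups then proves the theorem.
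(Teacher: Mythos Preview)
Your treatment of the twenty-nine ``nonnegative'' cases is equivalent to the paper's: the paper writes $g(t)=\sum_{k\ge 1}b_kt^k-1$ with $b_k\ge 0$ and $\gcd\{k:b_k\ne 0\}=1$, and argues directly that if $h(r_1)=1$ and $h(r_1e^{i\theta})=1$ then $\cos k\theta=1$ for all $k$ with $b_k>0$, forcing $\theta=0$. This is exactly the content of the Perron--Frobenius statement you invoke for the companion matrix, and your primitivity check via the cycle lengths (with $(2,3,2,3)$ handled separately) matches the paper's gcd condition. So on this part the two proofs coincide.

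The gap is in the four exceptional cases. Your Option~1, dividing off a cyclotomic factor $\Phi$ so that $q=\Phi\cdot r$ with $r$ of nonnegative type, does not work: for instance $q(-1)\ne 0$ in the case $(3,3,5,5)$, so $(t+1)\nmid q$, and there is no visible cyclotomic factor in any of the four polynomials. Your Option~2 is not actually carried out; you defer it to \cite{KU}, but that paper does not contain root-localization estimates --- its Lemma~1 is precisely the $\sum b_kt^k-1$ argument you have already used for the easy cases. The paper's resolution is the opposite move to your Option~1: rather than \emph{divide} by a cyclotomic, \emph{multiply} $g(t)$ by $(t+1)$ (three cases) or by $(t+1)^2$ (the case $(3,3,5,5)$). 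This introduces extra roots only at $t=-1$, of modulus $1>R$, and the product polynomial now has all nonconstant coefficients nonnegative with the gcd condition satisfied, so the same argument applies verbatim. That single multiply-by-$(t+1)$ trick is the missing idea in your proposal.
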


\begin{proof}
Let us denote the denominator polynomial of $f_S(t)$ by $(t-1)g(t)$.
Then except 4 cases $(3,3,4,5)$, $(2,3,4,5)$, $(3,3,5,5)$, and $(3,3,3,5)$, 
$g(t)$ has a form 
$$
\sum _{k=1}^n b_k t^k -1
$$ 
where $b_k$ is a non-negative integer and the greatest common divisor of $\{k \in \mathbb{N} \; |\; b_k \neq 0 \}$ is $1$.
After multiplying $g(t)$ by $(t+1)$ or $(t+1)^2$, 4 exceptional cases also have this form:
\begin{eqnarray*}
(2,3,4,5) &:&(t + 1) (t^{13}+t^{12}+2 t^{11}+2 t^{10}+3 t^9+2 t^8+3 t^7+2 t^6+3 t^5+t^4+3 t^3\\&&-t^2+2 t-1)\\
&&= t^{14}+2 t^{13}+3 t^{12}+4 t^{11}+5 t^{10}+5 t^9+5 t^8+5 t^7+5 t^6+4 t^5+4 t^4\\&&+2 t^3+t^2+t-1    \\
(3,3,3,5) &:&(t + 1) (t^9 + t^8 - t^7 + 3 t^6 - t^5 + t^4 + 2 t^3 - 2 t^2 + 3 t - 1)\\
&&=  t^{10}+ 2 t^9 + 2 t^7+ 2 t^6+ 3 t^4+ t^2 + 2 t -1   \\
(3,3,4,5) &:&(t + 1) (t^9+t^8+2 t^6+3 t^3-2 t^2+3 t-1)\\
&&=t^{10}+2 t^9+t^8+2 t^7+2 t^6+3 t^4+t^3+t^2+2 t-1    
 \\
(3,3,5,5) &:& (t + 1)^2 (t^7+t^6-t^5+2 t^4-t^2+3 t-1)\\
&&= t^9+3 t^8+2 t^7+t^6+3 t^5+t^4+t^3+4 t^2+t-1.     \\ 
\end{eqnarray*}
In this case we already proved our claim in Lemma 1 of \cite{KU};
for the sake of readability, we recall its proof.
Let us put $h(t)=\sum _{k=1}^n b_k t^k$.
Observe $h(0) = 0, \; h(1)>1$, and $h(t)$ is strictly monotonously increasing on the open interval $(0,1)$.
By the intermediate value theorem, there exists a unique real number $r_1$ in $(0,1)$ such that $h(r_1) = 1$, that is, $g(r_1)=0$.
Since all coefficients $a_k$ of the growth series $f_S(t)$ are non-negative integers, $r_1$ is the radius of convergence of the growth series $f_S(t)$.
Only we have to show is that $g(t)$ has no zeros on the circle $|t|=r_1$ other than $t=r_1$.
Consider a complex number $r_1e^{i\theta}$ on the circle $|t| = r_1$ where $\theta$ is $0\leq \theta <2\pi$.
If we assume $g(r_1e^{i\theta})=0$, that is,  $h(r_1e^{i\theta})=1$,
\begin{equation*}
1=\sum_{k=1}^n b_k r_1 ^k \cos k\theta \leq \sum _{k=1} ^n b_k r_1 ^k=1.
\end{equation*}
This implies that $\cos k\theta=1$ for all $k \in \mathbb{N}$ with $b_k \neq 0$.
Now the assumption that  the greatest common divisor of $\{k \in \mathbb{N} \; |\; b_k \neq 0 \}$ is equal to $1$ concludes that $\theta=0$,
hence $t=r_1$ is the unique pole of $f_S(t)$ on the circle $|t| = r_1$.
\end{proof}

\section{The hyperbolic volumes of Coxeter pyramids}
\label{volumes}

In this section, we calculate hyperbolic volumes of Coxeter pyramids by decomposing them into orthotetrahedra (cf.  \S 10.4. of  \cite{R} ).
For this purpose, we use the projected link which we have introduced in the former section.
We express a Coxeter orthotetrahedron by the symbol $[\pi/\alpha, \pi/\beta, \pi/\gamma]$
if its Coxeter graph is \includegraphics [width=50pt, clip]{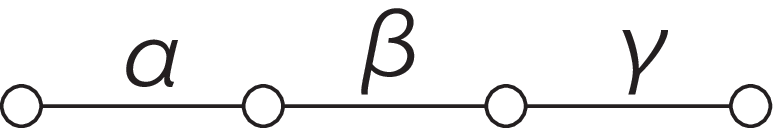},
and also express an ordinary orthotetrahedron by the same symbol $[\theta, \eta, \zeta]$ where $\theta, \eta, \zeta$ are not necessarily submultiples of $\pi$.

First, let us consider the {\em Coxeter orthopyramid} $P=(k,\ell,m,n)=(2,\ell,2,n)$
 whose Coxeter graph is linear (see the Coxeter graph in Fig.\ \ref{orthoscheme2}, 
 where a bullet express the base facet).
Note that its projected link is the rectangle on the first quadrant and its boundary (see the picture of a projected link in Fig.\ \ref{orthoscheme2}).
Then, by the hyperplane passing through the apex $\infty$, origin, and the vertex $u=\bigcap_{i=b,C,D} \overline{H_i}$, $P$ is decomposed into two orthotetrahedra {\textcircled{\footnotesize 1}$[\pi/2-\alpha, \alpha, \pi/\ell]$ and {\textcircled{\footnotesize 2}$[\alpha, \pi/2-\alpha, \pi/n]$ 
with vertex $v=\infty$, where $\displaystyle \alpha=\arctan( \cos(\pi/\ell)/\cos(\pi/n))$ (see the orthotetrahedra with corresponding numbers {\textcircled{\footnotesize 1}, {\textcircled{\footnotesize 2} in Fig.\ \ref{orthoscheme2}).
Note that we omit to write right angles in the pictures of Fig.\ \ref{orthoscheme2}.

\begin{figure}[htbp]
\begin{center}
 \includegraphics [width=300pt, clip]{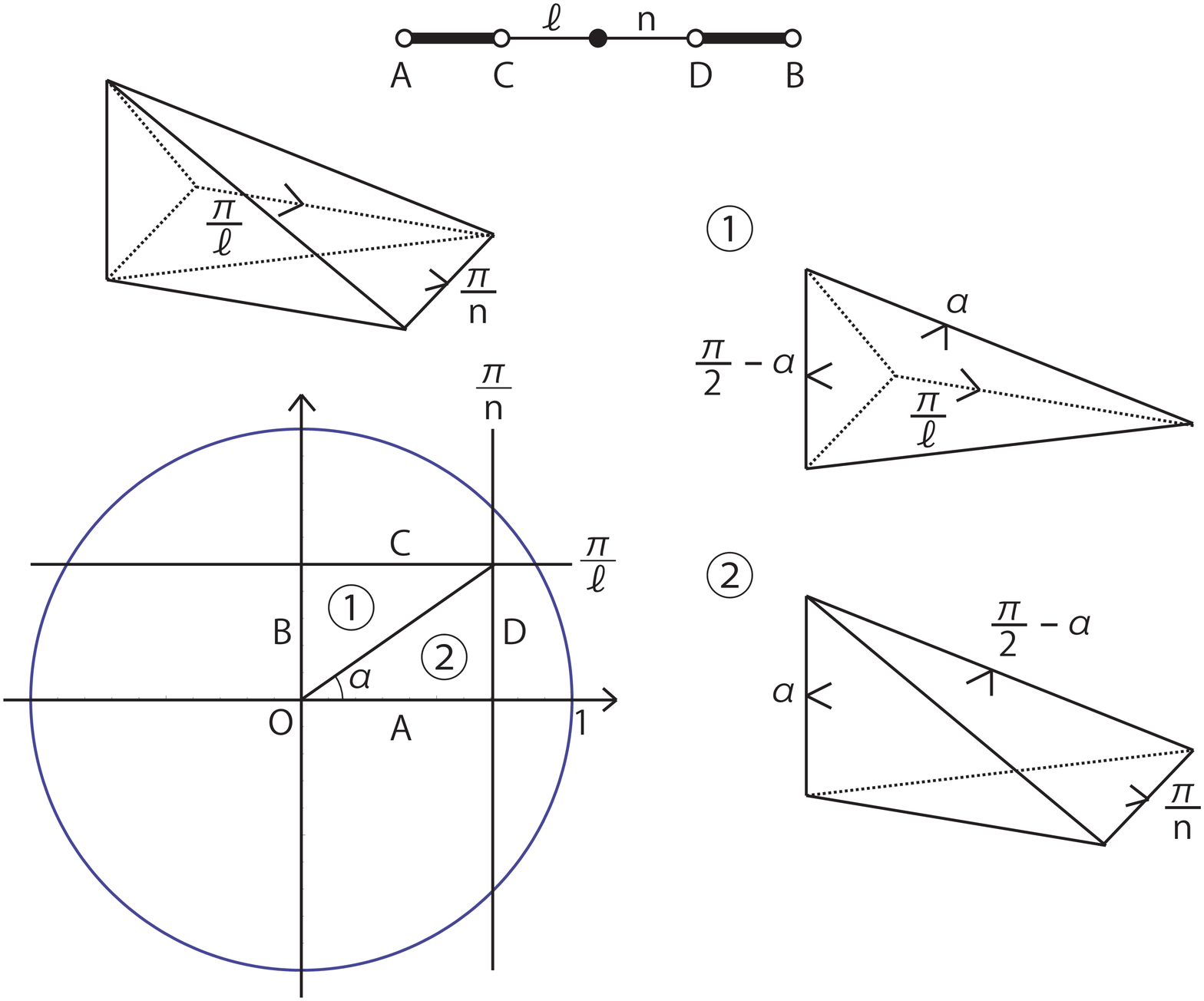}
\end{center}
\caption{Decomposition of a Coxeter orthopyramid into orthtetrahedra}
\label{orthoscheme2}
\end{figure} 
The hyperbolic volume of each orthotetrahedron is calculated as follows (cf. \cite[Theorem 10.4.6]{R}) :
\begin{equation*}
\begin{split}
\mbox{Vol}_{\H^3}[\frac{\pi}{2}-\alpha, \alpha, \frac{\pi}{\ell}]&=
\frac{1}{4}\left( \Pi(\frac{\pi}{2}-\alpha+\frac{\pi}{\ell})+\Pi(\frac{\pi}{2}-\alpha-\frac{\pi}{\ell})+2\Pi(\frac{\pi}{2}-(\frac{\pi}{2}-\alpha))\right)\\
\mbox{Vol}_{\H^3}[\alpha, \frac{\pi}{2}-\alpha, \frac{\pi}{n}]&=
\frac{1}{4}\left( \Pi(\alpha+\frac{\pi}{n})+\Pi(\alpha-\frac{\pi}{n})+2\Pi(\frac{\pi}{2}-\alpha)\right)
\end{split}
\end{equation*}
where $\Pi$ denote {\em Lobachevsky function} defined by the formula of period $\pi$ (cf. \cite[section 10.4]{R}):
$$
\Pi(\theta)=-\int^{\theta}_0 \log |2\sin t| dt.
$$
As a consequence, the hyperbolic volume of the orthopyramid is equal to the sum of volumes of two orthothterahedra: 
$$
\mbox{Vol}_{\H^3}(P)=\mbox{Vol}_{\H^3}[\pi/2-\alpha, \alpha, \pi/\ell]+\mbox{Vol}_{\H^3}[\alpha, \pi/2-\alpha, \pi/n].
$$

Next, let us consider a Coxeter pyramid $P$ which is not an orthopyramid.
By the hyperplanes each of which passes through the apex $\infty$, origin and the vertex $u=\bigcap _{i \in J} \overline{H_i}$, where $J$ is one of the set $\{ b, A, B\}$, $\{ b, B, C\}$, $\{ b, C, D\}$ and $\{ b, D, A\}$ (see section 3.1), $P$ is decomposed into orthopyramids (see Fig.\ \ref{planepyramid5}).
\begin{figure}[htbp]
\begin{center}
 \includegraphics [width=130pt, clip]{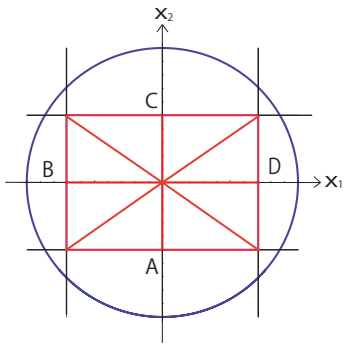}
\end{center}
\caption{Decomposition of a Coxeter pyramid into orthtetrahedra}
\label{planepyramid5}
\end{figure} 
Then, decomposing each orthopyramid into two orthotetrahedra, we have a decomposition of $P$ into orthotetrahedra, and we can calculate the hyperbolic volume of $P$.
We collect  hyperbolic volumes of all Coxeter pyramids in Table \ref{allvolumes}.
We see that $(4,4,4,4)$ has the maximal volume among all Coxeter pyramids in $\overline{\H^3}$.
But we can say much more; by considering the normalization defined in Definition 1,
any pyramid $P$ is contained in an ideal pyramid $Q$, i.e. a pyramid all of whose vertices  are ideal.
Then by means of Theorem 10.4.12 of  \cite{R}, we can calculate the volume of $Q$.
Moreover the proof of 
Theorem 10.4.11 of  \cite{R}  implies that $(4,4,4,4)$, the pyramid obtained by taking the cone to $\infty$ from an ideal square
on the hemisphere of radius one centered at the origin of $\E^2$
has the maximal volume among all ideal pyramids. Therefore

\begin{theo}
A pyramid of maximal volume in $\overline{\H^3}$
is $(4,4,4,4)$.
\end{theo}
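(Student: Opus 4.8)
The plan is to prove the stronger statement that $(4,4,4,4)$ has maximal volume among \emph{all} finite-volume pyramids in $\overline{\H^3}$, in two steps: every such pyramid is contained in an \emph{ideal} pyramid (one all of whose five vertices are ideal), and among ideal pyramids the one realised by $(4,4,4,4)$ -- the cone to $\infty$ over the regular ideal quadrilateral on the unit hemisphere -- has the largest volume. Since $(4,4,4,4)$ is itself an ideal pyramid (its four base vertices lie over the corners $(\pm\tfrac{1}{\sqrt2},\pm\tfrac{1}{\sqrt2})$ of its projected link, which lie on the unit circle), the two steps together give the theorem, and the strict monotonicity of hyperbolic volume under strict inclusion makes $(4,4,4,4)$ the unique maximiser.

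For the first step, observe first that a pyramid with \emph{finite} apex $v\in\H^3$ and base quadrilateral $B$ is contained in a pyramid with ideal apex: taking an interior point $p$ of $B$ and the ideal endpoint $v^{*}$ of the geodesic ray from $p$ through $v$, one has $v$ lying on the ray from $p$ to $v^{*}$, hence $v\in\mathrm{conv}(\{v^{*}\}\cup B)$, so $P=\mathrm{conv}(\{v\}\cup B)\subseteq\mathrm{conv}(\{v^{*}\}\cup B)$, which is a finite-volume pyramid with ideal apex. So assume the apex is ideal. Normalising as in the proof of Theorem~\ref{classify} (apex at $\infty$, base on the unit hemisphere about $O\in\E^2$, side facets vertical), $P$ becomes the cone to $\infty$ over the part of the unit hemisphere lying above a convex quadrilateral $R\subset\E^2$; the four corners of $R$, being the feet of the four edges of $P$ through the apex, lie in $\nu(\overline{H_b})$, the closed unit disk, by $(\ref{eqABCD})$, so $R$ is contained in the closed unit disk. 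Since volume is monotone in $R$, it suffices to enlarge $R$, inside the closed disk, to a convex quadrilateral $R'$ all of whose corners lie on the unit circle; the cone to $\infty$ over the part of the hemisphere above $R'$ is then an ideal pyramid containing $P$. For the Coxeter pyramids, where $R$ is an axis-parallel rectangle whose two $x_1$-parallel edges are at distances $\cos\theta_A,\cos\theta_C$ and whose two $x_2$-parallel edges are at distances $\cos\theta_B,\cos\theta_D$ from $O$, this is immediate: the inclusion in the closed disk forces $\max(\cos\theta_A,\cos\theta_C)^2+\max(\cos\theta_B,\cos\theta_D)^2\le 1$, so radially rescaling the corner of $R$ farthest from $O$ out to the unit circle and taking the centred axis-parallel rectangle through the resulting point produces such an $R'\supseteq R$. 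For a general convex quadrilateral $R$ in the closed disk one argues similarly, but the four circle points bounding $R'$ must be chosen with a little more care so that every edge of $R$ stays inside $R'$.

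For the second step, argue as in \S\ref{volumes}. An ideal pyramid $Q$, normalised as above, has link at $\infty$ equal to the cyclic convex quadrilateral $R'$; cutting $R'$ along a diagonal dissects $Q$ into two ideal tetrahedra whose links at $\infty$ are the two Euclidean triangles of the dissection. If the four vertices of $R'$ cut the unit circle into arcs $2a,2b,2c,2d$ with $a+b+c+d=\pi$, the inscribed-angle theorem gives these triangles the angle triples $(a,b,c+d)$ and $(c,d,a+b)$, so by the formula for the volume of an ideal tetrahedron in terms of the Lobachevsky function $\Pi$,
\[
\mbox{Vol}_{\H^3}(Q)=\Pi(a)+\Pi(b)+\Pi(c+d)+\Pi(c)+\Pi(d)+\Pi(a+b).
\]
Maximising the right-hand side over $a,b,c,d>0$ with $a+b+c+d=\pi$ -- the argument being the one behind the proof of \cite[Theorem~10.4.11]{R}, via the strict concavity of $\Pi$ on $(0,\pi/2)$ together with the period and odd-symmetry relations of $\Pi$ -- yields a unique maximum at $a=b=c=d=\pi/4$, i.e.\ when $R'$ is a square, i.e.\ when $Q=(4,4,4,4)$. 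Chaining the inequalities, $\mbox{Vol}_{\H^3}(P)\le\mbox{Vol}_{\H^3}(Q)\le\mbox{Vol}_{\H^3}\big((4,4,4,4)\big)$ for every pyramid $P$, with equality only for $P=(4,4,4,4)$.

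The step I expect to be the main obstacle is the enlargement in full generality: showing that \emph{every} convex quadrilateral contained in the closed unit disk lies inside some convex quadrilateral inscribed in the unit circle needs care precisely when the given quadrilateral is a thin sliver not containing the centre. The rectangular case, which is what is needed for the pyramids of Table~\ref{allvolumes}, is elementary, and the remaining ingredient -- the maximisation of the sum of Lobachevsky values -- is a direct adaptation of the estimate in the proof of \cite[Theorem~10.4.11]{R}.
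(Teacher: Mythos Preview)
Your approach is exactly the paper's: normalise, enclose any pyramid in an ideal pyramid, and then invoke the argument behind \cite[Theorem~10.4.11]{R} to see that the ideal pyramid over the ideal square maximises volume. The paper's own proof is a one-paragraph sketch citing Ratcliffe; you have essentially written out what that sketch means, including the explicit Lobachevsky expression (which, using $\Pi(\pi-s)=-\Pi(s)$, collapses to $\Pi(a)+\Pi(b)+\Pi(c)+\Pi(d)$ and makes the maximisation at $a=b=c=d=\pi/4$ transparent).

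The only point you leave incomplete is the one you flag yourself: that an arbitrary convex quadrilateral $R$ in the closed unit disk sits inside some cyclic quadrilateral $R'$. This is not hard to finish. Pick a diagonal of $R$, say $v_1v_3$, and extend it to a chord $p_1p_3$ of the unit circle; then $v_1,v_3\in[p_1,p_3]$. The remaining vertices $v_2,v_4$ lie in the two open half-disks cut off by this chord, and each such half-disk is the union of the triangles $p_1qp_3$ as $q$ ranges over the corresponding arc, so one may choose $p_2,p_4$ on the circle with $v_2\in\triangle p_1p_2p_3$ and $v_4\in\triangle p_1p_4p_3$. Then all four vertices of $R$ lie in the convex cyclic quadrilateral $R'=p_1p_2p_3p_4$, hence $R\subseteq R'$. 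With this paragraph added, your argument is complete and matches the paper's intent.
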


It should be remarked that the order of growths is not equal to that of volumes; for example the growth of $(2,3,2,6)$ is smaller than that of $(2,3,3,4)$
while the volume of $(2,3,2,6)$ is bigger than that of $(2,3,3,4)$.

\begin{table}[htbp]
\begin{tabular}{|c|c|c||c|c|c|}
\hline
$(k,\ell,m,n)$&growth&volume&$(k,\ell,m,n)$&growth&volume \\
\hline
$(2,3,2,3)$&$1.73469$&$0.152661$&$( 2,3,5, 6)$&2.40522&$0.75522$ \\
\hline
$(2, 3, 2, 4)$&$1.90648$&$0.25096$&$(3, 3, 3, 3)$&2.41421&$0.610644$\\
\hline
$(2,3,2, 5)$&1.9825&$0.332327$&$( 2,3,6, 6)$&2.42032&$0.845785$\\
\hline
$(2, 3, 3, 3)$&2.06599&$0.305322$&$(2, 4, 4, 4)$&2.45111&$0.915966$\\
\hline
$(2, 4, 2, 4)$& 2.06599&$0.457983$&$(3, 3, 3, 4)$&2.53983&$0.807242$ \\
\hline
$(2,3,2, 6)$&2.01561&$0.422892$&$( 3,3,3, 5)$&2.58553&$0.969976$ \\
\hline
$(2, 3, 3, 4)$&2.1946&$0.403621$&$( 3,3,3, 6)$&2.60198&$1.15111$   \\
\hline
$(2, 4, 3, 3)$&2.23757&$0.501921$&$(3, 3, 4, 4)$&2.64822&$1.00384$ \\
\hline
$(2,3,3, 5)$&2.24692&$0.484988$&$(3, 4, 3, 4)$&2.65364&$1.11256$ \\
\hline
$(2,3,3, 6)$&2.26809&$0.575553$&$(3,3,4, 5)$&2.68684&$1.16657$ \\
\hline
$(2, 5, 3, 3)$&2.30482&$0.664654$&$( 3,3,4, 6)$&2.70039&$1.34771$ \\
\hline
$(2, 3, 4, 4)$&2.30522&$0.501921$&$(3,3,5, 5)$&2.72275&$1.32931$\\
\hline
$(2, 6, 3,3)$&2.33081&$0.845785$&$( 3,3,5, 6)$&2.73526&$1.51044$ \\
\hline
$(2,3,4, 5)$&2.34913&$0.583287$&$( 3,3,6, 6)$&2.74738&$1.69157$\\
\hline
$(2, 4, 3, 4)$&2.35204&$0.708943$&$(3, 4, 4, 4)$&2.75303&$1.41789$ \\
\hline
$(2,3,4, 6)$&2.36644&$0.673853$&$(4, 4, 4, 4)$&2.84547&$1.83193$\\
\hline
$(2,3,5, 5)$&2.38946&$0.664654$&&&\\
\hline
\end{tabular}
\caption{Hyperbolic volumes of all Coxeter pyramids in $\overline{\H^3}$}
\label{allvolumes}
\end{table}

\section{A geometric order of Coxeter pyramids comparable with their growth rates}
We define a natural order of Coxeter pyramids in a geometric way.
For Coxeter pyramids $P_1$ and $P_2$, we define $P_1 \leq P_2$ if there exists $\varphi \in I(\U^3)$ such that
$\varphi(P_1) \subset P_2$.
It is easy to see that this binary relation is a partial order of the set of Coxeter pyramids.
We also denote $G_1 \leq G_2$ if corresponding  Coxeter pyramids satisfy $P_1 \leq P_2$.
We explain this geometric order in terms of $(k, \ell, m, n)$
appeared in section \ref{classification}.

We study the relationship between this partial order and the growth rates of reflection groups.
First, we shall state the following lemma  which makes us to get back this order of reflection groups to that of the projected links of the apex $\infty$ of their fundamental Coxeter pyramids. 

\begin{lem}
Let $G_1$ and $G_2$ be Coxeter groups defined by Coxeter pyramids.
Then $G_1 \leq G_2$ if and only if there exist corresponding  Coxeter pyramids $P_1$ and $P_2$ with apex $\infty$ and bounded by a common hemisphere of radius $1$ centered at the origin, such that $P_1 \subseteq P_2$.
Furthermore,  the projected links $\Delta_1:=\nu(L_1(\infty))$ and $\Delta_2:=\nu(L_2(\infty))$ on $\E^2$ of $P_1$ and $P_2$ satisfy $\Delta_1 \subseteq \Delta_2$ in this case.
\label{order}
\end{lem}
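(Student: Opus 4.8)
The plan is to prove the two implications of the equivalence separately; the assertion about projected links will come out of the configuration constructed for the forward implication. Throughout write $\Sigma_0=\{x\in\U^3:|x|=1\}$ for the unit hemisphere, and recall that a finite-volume pyramid has an ideal apex (a non-ideal apex would carry a spherical quadrilateral link, which does not exist, as in Section~\ref{classification}).

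\textbf{The ``if'' direction, and the link statement.} Suppose corresponding Coxeter pyramids $P_1,P_2$ have apex $\infty$, base facets on the common hemisphere $\Sigma_0$, and $P_1\subseteq P_2$. Then $\varphi=\mathrm{id}$ witnesses $G_1\le G_2$. For the projected links, choose any Euclidean horosphere $\Sigma=\{x_3=c\}$ with $c>1$; since each base hemisphere has radius $1$, $\Sigma$ meets only the four side facets of each $P_i$, so Theorem~\ref{link2} identifies $L_i(\infty)=P_i\cap\Sigma$ and hence $\Delta_i=\nu(L_i(\infty))$. From $P_1\subseteq P_2$ we get $P_1\cap\Sigma\subseteq P_2\cap\Sigma$, and applying the vertical projection $\nu$, which is injective on $\Sigma$, gives $\Delta_1\subseteq\Delta_2$. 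Thus the ``furthermore'' clause is automatic once such a configuration has been produced.

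\textbf{The ``only if'' direction: locating the apex.} Assume $G_1\le G_2$. After choosing representatives and applying an isometry to the first one, we may assume we have Coxeter pyramids $Q_1$ of type $G_1$ and $Q_2$ of type $G_2$ with $Q_1\subseteq Q_2$, and, after a further isometry, that $Q_2$ is normalized (apex $\infty$, base on $\Sigma_0$). Let $a$ be the apex of $Q_1$. Since $Q_1\subseteq Q_2$ we have $\overline{Q_1}\subseteq\overline{Q_2}$ in $\overline{\U^3}$, and because $\overline{Q_2}\cap\partial\U^3$ consists exactly of the ideal vertices of $Q_2$, the point $a$ is an ideal vertex of $Q_2$. \emph{The crux is to show $a=\infty$, the apex of $Q_2$.} The mechanism is that the apex is the only vertex of a pyramid lying on four facets, whereas a base vertex lies on three. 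Fixing a small horoball based at $a$ with boundary $\Sigma'$, Theorem~\ref{link2} presents $L_{Q_1}(a)=Q_1\cap\Sigma'$ and $L_{Q_2}(a)=Q_2\cap\Sigma'$ as Euclidean convex polygons with $L_{Q_1}(a)\subseteq L_{Q_2}(a)$; here $L_{Q_1}(a)$ is a rectangle (the four side facets of $Q_1$ meet pairwise at right angles), while $L_{Q_2}(a)$ has one side per facet of $Q_2$ through $a$. If $a$ were a base vertex of $Q_2$, then $L_{Q_2}(a)$ would be a triangle; I would exclude this by observing that the four supporting hyperplanes of $Q_1$ through $a$ are all essential facets of $Q_1\subseteq Q_2$, so they cannot all lie among the three facet hyperplanes of $Q_2$ through $a$, and then analysing how the remaining facet of $Q_1$, together with its base facet, must close up the cusp of $Q_1$ inside the three-sided cusp of $Q_2$, using finiteness of $\mathrm{vol}(Q_1)$, to force a contradiction. \emph{This is the step I expect to be the main obstacle.}

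\textbf{Concluding.} Granting $a=\infty$, both $Q_1$ and $Q_2$ have apex $\infty$, so each has four vertical side facets and a base facet supported on a Euclidean hemisphere; let $H_1$ be the base hemisphere of $Q_1$. Applying the Euclidean similarity fixing $\infty$ that carries $H_1$ onto $\Sigma_0$ to the pair $(Q_1,Q_2)$, we may assume $Q_1$ is normalized. Comparing horizontal cross-sections at large height, $Q_1\subseteq Q_2$ yields a containment of the (axis-parallel, origin-containing) rectangular projected links, and, tracking the base facets and using that $Q_1$ stays outside the Euclidean ball bounded by $Q_2$'s base hemisphere, one controls the position and radius of that hemisphere well enough to apply a further rotation, reflection and dilation fixing $\infty$ — none of which alters the isometry type of either pyramid — that places the base facet of $Q_2$ on $\Sigma_0$ as well while keeping $Q_1$ normalized; since there are only finitely many pyramids (Theorem~\ref{classify}), this last bookkeeping can in any case be verified directly against the explicit list of rectangles, and for two normalized pyramids sharing the base $\Sigma_0$ the inclusion $P_1\subseteq P_2$ is equivalent to $\Delta_1\subseteq\Delta_2$, so the containment survives, after a final move bringing the rectangles into the positions fixed by $(\ref{klmn})$. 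Taking $P_1,P_2$ to be the resulting normalized pyramids gives corresponding Coxeter pyramids with apex $\infty$ and common base $\Sigma_0$ satisfying $P_1\subseteq P_2$, and $\Delta_1\subseteq\Delta_2$ then follows from the first part.
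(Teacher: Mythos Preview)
Your write-up is considerably more careful than the paper's own argument, which consists of two sentences: normalize each $P_i$ to have apex $\infty$ and base on the unit hemisphere (Section~2), and then observe that for two such pyramids the side facets are vertical half-planes through $\infty$, so $P_1\subseteq P_2$ is equivalent to $\Delta_1\subseteq\Delta_2$. In other words, the paper really only proves the ``furthermore'' clause (your first paragraph), and tacitly treats the hard direction of the biconditional --- that an arbitrary inclusion $\varphi(P_1)\subset P_2$ can be replaced by one between normalized pyramids sharing apex and base --- as obvious. You are right that it is not obvious, and the apex-matching step you isolate is exactly the point the paper skips.

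That said, your proposed resolution of that step has a genuine gap. The sketch in your ``only if'' paragraph argues that if the apex $a$ of $Q_1$ landed at an ideal \emph{base} vertex of $Q_2$, one could compare links and reach a contradiction via facet-counting and finite volume. But a Euclidean rectangle can perfectly well sit inside a Euclidean Coxeter triangle (e.g.\ inside a $(2,4,4)$ or $(2,3,6)$ triangle), so the link inclusion $L_{Q_1}(a)\subseteq L_{Q_2}(a)$ is not obstructed; the four side hyperplanes of $Q_1$ through $a$ need not be among the three facet hyperplanes of $Q_2$ through $a$, and finiteness of $\mathrm{vol}(Q_1)$ is already guaranteed by $Q_1$'s own base facet, so no contradiction arises along the lines you indicate. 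The subsequent base-matching paragraph is likewise only a sketch: the similarity carrying $H_1$ to $\Sigma_0$ moves $Q_2$ off its normalized position, and you do not give a concrete reason why a further axis-preserving isometry can restore $Q_2$'s base to $\Sigma_0$ while keeping $Q_1$ normalized and the inclusion intact.

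You salvage both steps by invoking the finite list from Theorem~\ref{classify}, which is legitimate --- and, in effect, is also what the paper does, since the Corollary is ultimately read off from the chart in Fig.~\ref{inclusion}. So your argument is not wrong, but the conceptual proof you aim for is not complete: either supply a clean geometric reason why the apices must coincide (for instance, by exploiting that among the vertices of a Coxeter pyramid the apex is characterized intrinsically, together with a constraint on how one finite-volume Coxeter polyhedron can sit inside another at an ideal vertex), or state up front that the ``only if'' direction is established by inspection of the finitely many normalized rectangles.
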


\begin{proof}
As we have already seen in Section 2, there exists a  Coxeter pyramid $P_i$ $(i=1,2)$ of $G_i$ transferred to have the apex $\infty$ and bounded by a common hemisphere of radius $1$ centered at the origin. 
Therefore  by means of concurrency of the side facets of $P_1$ and $P_2$, $P_1\subseteq P_2$ if and only if $\Delta_1 \subseteq \Delta_2$.
\end{proof}

\begin{prop}
For Coxeter groups $G_1$ and $G_2$ defined by Coxeter pyramids, $G_1 \leq G_2$ if and only if there exist corresponding Coxeter pyramids $P_1$ and $P_2$
with numbering $(k', \ell', m', n')$ and $(k'', \ell'', m'', n'')$ described  in {\rm Fig.\ \ref{5gene-pyramid}}
satisfying $k'\leq k''$, $\ell' \leq \ell''$, $m'\leq m''$, and $n'\leq n''$.
\end{prop}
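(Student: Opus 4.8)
The plan is to carry the whole question into the Euclidean picture of the projected links that Lemma \ref{order} makes available. First I would record the explicit shape of a normalized projected link. Since a bounding edge of the link with label $j$ lies at Euclidean distance $\cos(\pi/j)$ from the origin $O$, the normalization of Definition 1 together with (\ref{klmn}) puts the projected link of the pyramid with parameters $(k,\ell,m,n)$ into the axis-parallel form
\[
\Delta_{(k,\ell,m,n)}=\Bigl[-\cos\tfrac{\pi}{m},\;\cos\tfrac{\pi}{n}\Bigr]\times\Bigl[-\cos\tfrac{\pi}{k},\;\cos\tfrac{\pi}{\ell}\Bigr],
\]
a rectangle that always contains $O$ in its closure, as $\cos(\pi/j)\ge0$ for $j\ge2$. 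I would also note that a cyclic relabelling of the edges $A,B,C,D$, or a reflection of them, is induced by an isometry of $\U^3$ fixing $\infty$ and the hemisphere of radius $1$ about $O$; it therefore does not change the isometry class of the pyramid and, on the level of projected links, merely rotates $\Delta$ by a multiple of $\pi/2$ or reflects it across a coordinate axis.

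For the implication ``$\Leftarrow$'', suppose numberings with $k'\le k''$, $\ell'\le\ell''$, $m'\le m''$, $n'\le n''$ are given. Placing $P_1$ and $P_2$ in normalized position so that the edges carrying the labels $(k',\ell',m',n')$ and $(k'',\ell'',m'',n'')$ occupy matching slots, and using that $j\mapsto\cos(\pi/j)$ is strictly increasing on integers $j\ge2$, each bounding line of $\Delta_1:=\Delta_{(k',\ell',m',n')}$ lies weakly between $O$ and the parallel bounding line of $\Delta_2:=\Delta_{(k'',\ell'',m'',n'')}$; since both rectangles are axis-parallel and contain $O$, this yields $\Delta_1\subseteq\Delta_2$, and Lemma \ref{order} gives $G_1\le G_2$.

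For ``$\Rightarrow$'', I would start from $G_1\le G_2$ and invoke Lemma \ref{order} to get representatives $P_1,P_2$ with common apex $\infty$, bounded by the common hemisphere of radius $1$ about $O$, and with $\Delta_1\subseteq\Delta_2$. Normalizing $P_2$, the only remaining freedom in $P_1$ is a single Euclidean isometry $\rho$ of $\E^2$ fixing $O$, so $\Delta_1=\rho\bigl(\Delta_{(k_1,\ell_1,m_1,n_1)}\bigr)$ while $\Delta_2=\Delta_{(k_2,\ell_2,m_2,n_2)}$ stays axis-parallel. Once one knows that $\rho$ may be chosen to carry the coordinate axes to the coordinate axes, $\Delta_1$ and $\Delta_2$ become parallel axis-aligned rectangles, both containing $O$, with $\Delta_1\subseteq\Delta_2$; then each of the four edge-distances of $\Delta_1$ is at most the corresponding one of $\Delta_2$, and relabelling the edges of $P_1$ accordingly and inverting $j\mapsto\cos(\pi/j)$ produces a numbering $(k',\ell',m',n')$ of $P_1$ which, set against the canonical numbering $(k'',\ell'',m'',n'')=(k_2,\ell_2,m_2,n_2)$ of $P_2$, satisfies $k'\le k''$, $\ell'\le\ell''$, $m'\le m''$, $n'\le n''$, as claimed.

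The step I expect to be the main obstacle is exactly the rigidity claim just used: \emph{if a rotated copy of one $\Delta_{(k,\ell,m,n)}$ fits inside another, then an axis-parallel copy already fits.} In general this is false (a long thin rectangle fits into a small square only when tilted), so the proof must exploit the special features present here: every $\Delta_{(k,\ell,m,n)}$ contains $O$, lies inside the unit disk, and has all four of its edge-distances in the finite set $\{0,\tfrac12,\tfrac1{\sqrt2},\cos\tfrac{\pi}{5},\tfrac{\sqrt3}{2}\}$. One inclusion-monotone and rotation-invariant quantity already pins down much: the radius of the largest disk about $O$ contained in $\Delta_{(k,\ell,m,n)}$ equals its smallest edge-distance $\cos(\pi/k)$, which forces $k'\le k''$ immediately; the remaining bookkeeping amounts to tracking how the other three edges move as $\rho$ rotates and checking that a rotation through an angle not a multiple of $\pi/2$ always pushes some vertex of $\Delta_{(k,\ell,m,n)}$ outside every larger candidate rectangle. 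Since Theorem \ref{classify} leaves only the $33$ explicit pyramids, this last verification can, if one prefers, be carried out by finite inspection.
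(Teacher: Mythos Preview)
Your argument is considerably more detailed than the paper's, which consists of the single sentence ``It is obvious from concurrency of side facets passing through the apex $\infty$.'' The paper evidently has in mind exactly your picture: with both pyramids sharing apex $\infty$ and the unit hemisphere as base, the four side facets are vertical half-planes, so $P_1\subseteq P_2$ is equivalent to the inclusion of the projected rectangles, and if those rectangles happen to be axis-parallel the four parameter inequalities drop out from the monotonicity of $j\mapsto\cos(\pi/j)$. Your treatment of the ``$\Leftarrow$'' direction follows this line precisely and is fine.

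Where you go beyond the paper is in the ``$\Rightarrow$'' direction: you notice, correctly, that Lemma~\ref{order} only positions $P_1$ and $P_2$ with common apex and base, and that after normalizing $P_2$ the rectangle $\Delta_1$ may sit at a non-trivial angle. The paper's phrase ``concurrency of side facets'' does not by itself rule this out, so you have isolated a genuine gap in the paper's one-line argument. Your observation that the inscribed disk about $O$ already forces $k'\le k''$ is a nice invariant, and your fallback---checking the finitely many pairs coming from Theorem~\ref{classify}---is a legitimate way to close the gap, even if it is less conceptual than one might wish. In short: your ``$\Leftarrow$'' matches the paper; your ``$\Rightarrow$'' is more scrupulous than the paper's own proof, which simply does not address the rotation issue you raise.
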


\begin{proof}
It is obvious from concurrency of side facets passing through the apex $\infty$.
\end{proof}

We describe the order relations of   Coxeter pyramids in Fig.\ \ref{inclusion};
each number $(k, \ell, m, n)$ represents the corresponding Coxeter pyramid and we attach its growth rate also.
From this chart, we can see the following result.

\begin{figure}[h]
\begin{center}
 \includegraphics [width=450pt, clip]{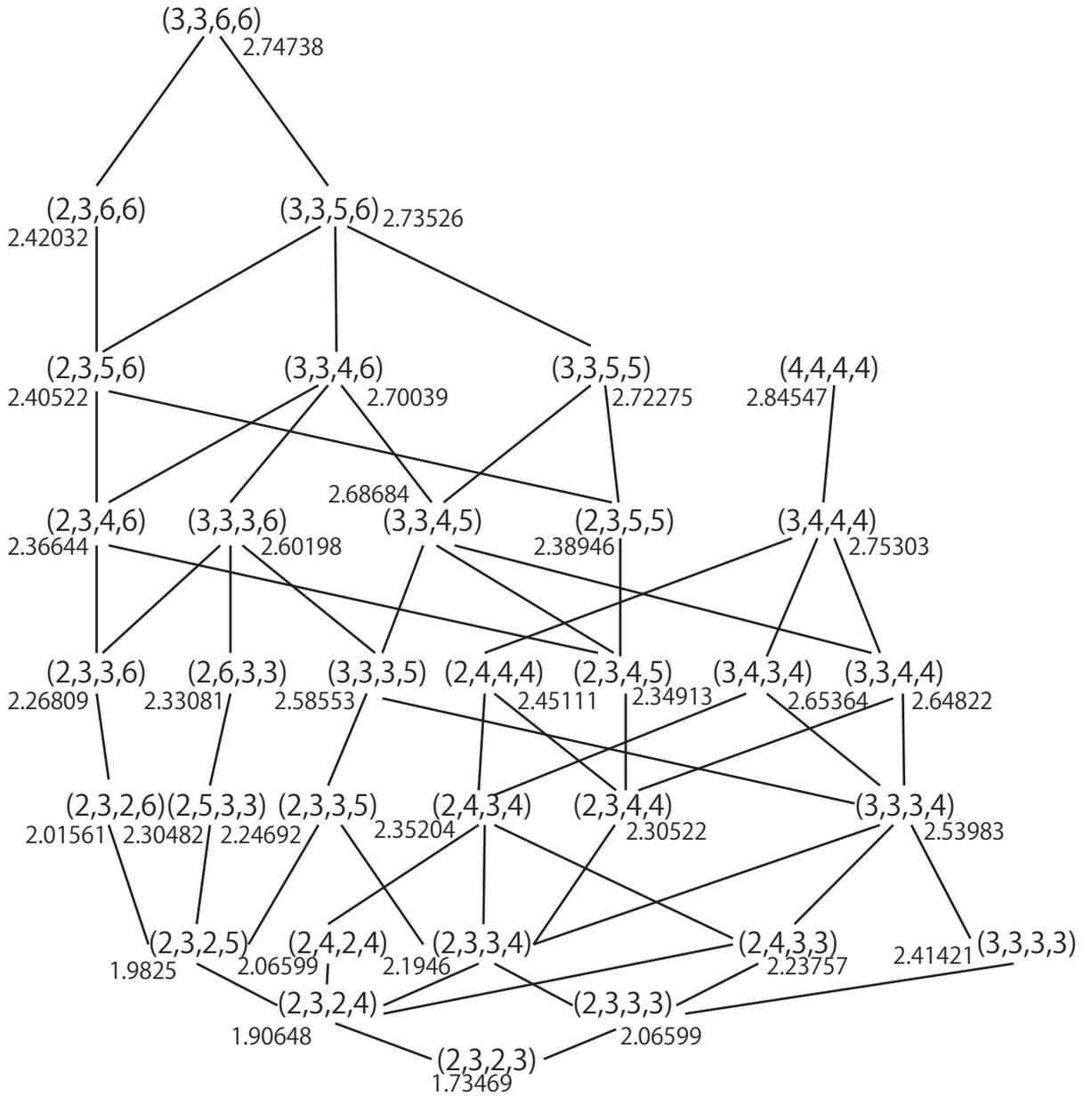}
\end{center}
\caption{A partial order of Coxeter pyramids comparable with their growth rates}
\label{inclusion}
\end{figure}

\begin{cor}
Let $G_1$ and $G_2$ be two Coxeter groups defined by Coxeter pyramids,  and $\tau_1$ and $\tau_2$ be their growth rates respectively.
Then $\tau_1\leq \tau_2$ if and only if $G_1\leq G_2$.
\label{cor-growth}
\end{cor}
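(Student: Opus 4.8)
The plan is to prove the corollary by establishing the "only if" direction geometrically and the "if" direction by invoking the monotonicity of growth rates under inclusion, then reconciling the two via the classification. The key conceptual input is that $G_1 \le G_2$ has been translated, in Lemma \ref{order} and the Proposition, into the purely combinatorial condition that some representatives have $(k',\ell',m',n')$ coordinatewise $\le (k'',\ell'',m'',n'')$ in the numbering of Fig.\ \ref{5gene-pyramid}. So the corollary amounts to the assertion that the map $(k,\ell,m,n)\mapsto\tau$ is \emph{order-preserving and order-reflecting} for the coordinatewise partial order restricted to the list of admissible tuples.

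First I would prove the implication $G_1 \le G_2 \Rightarrow \tau_1 \le \tau_2$. By Lemma \ref{order} we may take Coxeter pyramids $P_1 \subseteq P_2$ sharing the apex $\infty$ and the unit hemisphere, with $P_1 \subseteq P_2$. A reflection subgroup relation then holds: every generating reflection of $G_2$ that bounds a facet meeting $P_1$ restricts correctly, and more to the point $G_1$ is a subgroup of $G_2$ — indeed $P_1$ being a union of copies of the fundamental domain structure is not automatic, so instead I would argue directly that inclusion of Coxeter polytopes with a common facet system gives, by the concurrency of side facets through $\infty$ noted in the Proposition's proof, an inclusion of the associated reflection groups as Coxeter subgroups in the sense of Steinberg's formula. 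Granting $G_1 \le G_2$ as groups with compatible generating sets, de la Harpe's monotonicity (or a direct argument: the growth series coefficients satisfy $a_k(G_1) \le a_k(G_2)$ because words in the smaller set are words in the larger set and distinct group elements stay distinct) yields $\tau_1 \le \tau_2$. This is the step I expect to require the most care: one must be sure the relevant subgroup is \emph{standard parabolic-like} enough that the word-length comparison is valid, i.e. that $\ell_{S_2}(g) \le \ell_{S_1}(g)$ for $g \in G_1$; here $S_1 \subseteq S_2$ so this is immediate, and hence $a_k(G_1)\le a_k(G_2)$ for all $k$, giving $\tau_1 = \limsup a_k(G_1)^{1/k} \le \limsup a_k(G_2)^{1/k} = \tau_2$.

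Next I would prove the converse, $\tau_1 \le \tau_2 \Rightarrow G_1 \le G_2$. This direction is not formal and genuinely uses the explicit data: it asserts that the growth rate is a \emph{total} refinement of the partial order, so whenever two pyramids are incomparable in the inclusion order, their growth rates are nonetheless comparable in a way consistent with \emph{some} inclusion chain. The cleanest route is to inspect Fig.\ \ref{inclusion} and Table \ref{allvolumes}: the Hasse diagram of the inclusion order is drawn with growth rates attached, and one checks that reading the diagram top-to-bottom by growth rate produces a linear extension in which, moreover, any two tuples with $\tau_1\le\tau_2$ are actually related by a directed path. Concretely I would verify that no two distinct admissible tuples share the same growth rate (a finite check against Table \ref{allvolumes}), so $\tau_1 \le \tau_2$ with $\tau_1 \ne \tau_2$ forces $P_1 \ne P_2$, and then exhibit — again from the chart — the coordinatewise inequality $k'\le k''$, $\ell'\le\ell''$, $m'\le m''$, $n'\le n''$ for suitable representatives, which by the Proposition gives $G_1 \le G_2$. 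The equality case $\tau_1 = \tau_2$ forces $P_1 = P_2$ hence $G_1 = G_2 \le G_2$.

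The main obstacle is the converse implication, because it has no structural reason to hold a priori — it is a coincidence of the specific finite family — so the proof is essentially a tabulation: one must confront every pair $(P_1,P_2)$ with $\tau_1 \le \tau_2$ and locate a chain in Fig.\ \ref{inclusion} realizing it. I would organize this by the value of $k$ (which takes only the values $2,3,4$, cf.\ the proof of Theorem \ref{classify}), handle the $34$ tuples in the three resulting blocks, and note that within the chart every growth-rate comparison is witnessed by an edge or composite of edges; the distinctness of the $34$ growth values in Table \ref{allvolumes} closes the argument. I would remark that this is exactly the content asserted by the caption of Fig.\ \ref{inclusion}, so the corollary is, in effect, the statement that the displayed diagram is correct.
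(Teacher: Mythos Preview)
The paper's own proof is a single sentence --- ``From this chart, we can see the following result'' --- so both directions are handled purely by inspection of Fig.~\ref{inclusion}. You attempt a structural argument for the implication $G_1\le G_2\Rightarrow\tau_1\le\tau_2$, but the step on which it rests is false. When $P_1\subseteq P_2$ are normalized as in Lemma~\ref{order}, they share only the reflection in the base hemisphere: the side facets of $P_i$ are vertical half-planes at Euclidean distance $\cos(\pi/k),\cos(\pi/\ell),\ldots$ from the origin, and as soon as one label changes (say $n'=3$ versus $n''=4$) the corresponding hyperplanes, hence the corresponding reflections, are distinct. So $S_1\not\subseteq S_2$, $S_2\not\subseteq S_1$, and in general neither group contains the other (e.g.\ neither of the groups for $(2,3,2,3)$ and $(2,3,2,4)$ is a subgroup of the other). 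The word-length comparison you invoke therefore has no foundation; if one wants a conceptual argument here it must go through a comparison of Coxeter \emph{matrices} (the labels $k,\ell,m,n$ increase entrywise while the remaining entries are fixed at $2$ or $\infty$), not through a containment of generating sets.

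Your treatment of the converse also contains a concrete error: you plan to ``verify that no two distinct admissible tuples share the same growth rate,'' but items (4) and (5) in \S\ref{growth-rates} show that $(2,3,3,3)$ and $(2,4,2,4)$ have the \emph{identical} denominator polynomial $(t-1)(t^4+2t^3+t^2+t-1)$, hence the same growth rate $\tau\approx 2.06599$ (this is also visible in Table~\ref{allvolumes}). These two pyramids are incomparable in the partial order: the opposite pairs of $(2,3,3,3)$ are $\{2,3\}$ and $\{3,3\}$, while those of $(2,4,2,4)$ are $\{2,4\}$ and $\{2,4\}$, and no matching makes one coordinatewise $\le$ the other. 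Thus the distinctness check fails, and with it your route to the converse; whatever Fig.~\ref{inclusion} is meant to display, the argument cannot be organized around pairwise distinct growth rates.
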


Finally it shoud be remarked that the hyperbolic volume of the defining Coxeter pyramid of $G_1$ is smaller than or equal to that of $G_2$ 
if $G_1\leq G_2$ from the definition, while the converse is not true in general.

\clearpage
\section{Acknowledgement}
The authors thank Professor Ruth Kellerhals for telling them the reference \cite{T}.
The first author was partially supported by Grant-in-Aid for Scientific Research(C) (19540194), Ministry of Education, Science and Culture of Japan.
The second author was partially supported by Grant-in-Aid for JSPS Fellows 
no.\ 12J04747, and by the JSPS Institutional Program for 
Young Researcher Overseas Visits
`` Promoting international young researchers in mathematics and 
mathematical sciences led by OCAMI ".

\end{document}